\title[Poly-freeness of even Artin groups of FC type]{Poly-freeness of even Artin groups of FC type}
\author[R Blasco-Garc\'ia]{Rub\'en Blasco-Garc\'ia}
\address{Departamento de Matem\'aticas\\
Universidad de Zaragoza\\
50009 Zaragoza\\
Spain}
\email{rubenb@unizar.es}
\urladdr{}
\author[C Mart\'inez-P\'erez]{Conchita Mart\'inez-P\'erez}
\address{Departamento de Matem\'aticas\\
Universidad de Zaragoza\\
50009 Zaragoza\\
Spain}
\email{conmar@unizar.es}
\urladdr{}
\author[L Paris]{Luis Paris}
\address{IMB, UMR 5584\\
CNRS, Univ. Bourgogne Franche-Comt\'e\\
21000 Dijon\\
France}
\email{lparis@u-bourgogne.fr}
\urladdr{}
\newtheorem{thm}{Theorem}[section]
\newtheorem{lem}[thm]{Lemma}
\newtheorem{prop}[thm]{Proposition}
\theoremstyle{definition}
\newtheorem*{rem}{Remark}
\numberwithin{equation}{section}
\renewcommand{\thefigure}{\ifnum \c@section>\z@ \thesection.\fi
 \@arabic\c@figure}
\def\NN{\mathcal N} \def\N{\mathbb N} \def\lk{{\rm lk}}
\def\Ker{{\rm Ker}} \def\Id{{\rm Id}} \def\supp{{\rm supp}}
\begin{document}


\begin{abstract}
We prove that even Artin groups of FC type are poly-free and residually finite.
\end{abstract}

\maketitle


\section{Introduction}

One of the families of groups where the interaction between algebraic and geometric techniques has been most fruitful is the family of Artin groups, also known as Artin-Tits groups or generalized braid groups. 
There are very few works dealing with all the Artin groups, and the theory mainly consists on the study of more or less extended subfamilies. 
The subfamily of right-angled Artin groups is one of the most studied, in particular because of they resounding applications in homology of groups by Bestvina--Brady \cite{BesBra1} and in low dimensional topology by Haglund--Wise \cite{HagWis1} and Agol \cite{Agol1}.
We refer to Charney \cite{Charn1} for an introductory survey on these groups.

The  main goal of the present paper is to prove that some known structural property of right-angled Artin groups also holds for a larger family of groups (see Theorem \ref{Thm3_17}).
The property is the ``poly-freeness'' and the family is that of even Artin groups of FC type.

A group $G$ is called \emph{poly-free} if there exists a tower of subgroups 
\[
1= G_{0} \unlhd G_{1} \unlhd  \cdots \unlhd G_{N}=G 
\]
for which each quotient $G_{i+1}/G_i$ is a free group.
Recall that a group is \emph{locally indicable} if every non-trivial finitely generated subgroup admits an epimorphism onto $\Z$, and a group is \emph{right orderable} if it admits a total order invariant under right multiplication.
Poly-free groups are locally indicable, and locally indicable groups are right orderable (see Rhemtulla--Rolfsen \cite{RheRol1}).
The fact that right-angled Artin groups are poly-free was independently proved by Duchamp--Krob \cite{DucKro1}, Howie \cite{Howie1} and  Hermiller--\v{S}uni\'c \cite{HerSun1}.

Artin groups of FC type were introduced by Charney--Davis \cite{ChaDav1} in their study of the $K (\pi, 1)$ conjecture for Artin groups.
They are particularly interesting because there are CAT(0) cubical complexes on which they act. 
This family contains both, the right-angled Artin groups, and the Artin groups of spherical type (those corresponding to finite Coxeter groups). 
An Artin group is called \emph{even} if it is defined by relations of the form $(st)^k = (ts)^k$, $k \ge 1$.
There are some papers in the literature dealing with even Coxeter groups (see for example \cite{AnCio}) but as far as we know, there is no other paper dealing with even Artin groups (although they are mentioned in \cite{ArCogMat}). However, we think that even Artin groups deserve to be studied because they have remarkable properties.  
One of them is that such a group retracts onto any parabolic subgroup (see Section \ref{Sec2}).

Our proof is partially inspired by the one of Duchamp--Krob \cite{DucKro1} for right-angled Artin groups in the sense that, as them, we define a suitable semi-direct product  of an Artin group based on a proper subgraph with a free group, and we show that this semi-direct product is isomorphic to the original Artin group. 
However, in our case, this approach is more complicated, essentially because we have to deal with relations that are also more complicated.

In Section \ref{Sec4} we prove that even Artin groups of FC type are residually finite. 
This result is a more or less direct consequence of Boler--Evans \cite{BolEva1}, but it deserves to be pointed out since it increases the list of Artin groups shown to be residually finite. 
Actually, this list is quite short.
It contains the spherical type Artin groups (since they are linear), the right-angled Artin groups (since they are residually nilpotent), and few other examples.
In particular, it is not known whether all Artin groups of FC type are residually finite.

\subsection*{Acknowledgements} {We thank Yago Antol\'in for suggesting us the main problem studied in this paper.
The first two named authors were partially supported by Gobierno de Arag\'on, European Regional 
Development Funds and 
MTM2015-67781-P (MINECO/FEDER). The first named author was moreover supported by the Departamento de Industria e Innovaci\'on del Gobierno de Arag\'on and Fondo Social Europeo Phd grant}


\section{Preliminaries}\label{Sec2}

\subsection{Artin groups}

Let $S$ be a finite set. 
A \emph{Coxeter matrix} over $S$ is a square matrix $M=(m_{s,t})_{s,t\in S}$ indexed by the elements of $S$, with coefficients in $\N \cup \{ \infty\}$, and satisfying $m_{s,s}=1$ for all $s \in S$ and $m_{s,t} = m_{t,s} \ge 2$ for all $s,t \in S$, $s \neq t$.
We will represent such a Coxeter matrix $M$ by a labelled graph $\Gamma$, whose set of vertices is $S$, and where two distinct vertices $s,t \in S$ are linked by an edge labelled with $m_{s,t}$ if $m_{s,t} \neq \infty$.
We will also often use the notation $V(\Gamma)$ to denote the set of vertices of $\Gamma$ (that is, $V(\Gamma) = S$).

\begin{rem}
The labelled graph $\Gamma$ defined above is not the Coxeter graph of $M$ as defined in Bourbaki \cite{Bourb1}.
It is another fairly common way to represent a Coxeter matrix. 
\end{rem}

If $a,b$ are two letters and $m$ is an integer $\ge 2$, we set $\Pi(a,b:m) = (ab)^{\frac{m}{2}}$ if $m$ is even, and $\Pi(a,b:m) = (ab)^{\frac{m-1}{2}}a$ if $m$ is odd. 
In other words, $\Pi(a,b:m)$ denotes the word $aba \cdots$ of length $m$.
The \emph{Artin group} of $\Gamma$, $A=A_\Gamma$, is defined by the presentation
\[
A = \langle S \mid \Pi(s,t: m_{s,t}) = \Pi(t,s: m_{s,t}) \text{ for all } s,t \in S,\ s \neq t \text{ and } m_{s,t} \neq \infty \rangle\,.
\]
The \emph{Coxeter group} of $\Gamma$, $W=W_\Gamma$, is the quotient of $A$ by the relations $s^2=1$, $s \in S$.

For $T \subset S$, we denote by $A_T$ (resp. $W_T$) the subgroup of $A$ (resp. $W$) generated by $T$, and by $\Gamma_T$ the full subgraph of $\Gamma$ spanned by $T$. 
Here we mean that each edge of $\Gamma_T$ is labelled with the same number as its corresponding edge of $\Gamma$.
By Bourbaki \cite{Bourb1}, the group $W_T$ is the Coxeter group of $\Gamma_T$, and, by van der Lek \cite{Lek1}, $A_T$ is the Artin group of $\Gamma_T$.
The group $A_T$ (resp. $W_T$) is called a \emph{standard parabolic subgroup} of $A$ (resp. of $W$).

As pointed out in the introduction, the theory of Artin groups consists in the study of more or less extended families. 
The one concerned by the present paper is the family of even Artin groups of FC type. 
These are defined as follows. 
We say that $A=A_\Gamma$ is of \emph{spherical type} if its associated Coxeter group, $W_\Gamma$, is finite.
A subset $T$ of $S$ is called \emph{free of infinity} if $m_{s,t} \neq \infty$ for all $s,t \in T$.
We say that $A$ is of \emph{FC type} if $A_T$ is of spherical type for every free of infinity subset $T$ of $S$.
Finally, we say that $A$ is \emph{even} if $m_{s,t}$ is even for all distinct $s,t \in S$ (here, $\infty$ is even).

\begin{rem}
As we said in the introduction, we think that even Artin groups deserve special attention since they have particularly interesting properties as the following ones.
Assume that $A$ is even. 
\begin{itemize}
\item[(1)]
Let $s,t \in S$, $s \neq t$.
If we set $m_{s,t} = 2k_{s,t}$, then the Artin relation $\Pi(s,t:m_{s,t}) = \Pi(t,s:m_{s,t})$ becomes $(st)^{k_{s,t}}= (ts)^{k_{s,t}}$.
This form of relation is less innocuous than it seems (see Subsection \ref{SubSec2_3} for example).
\item[(2)]
Let $T$ be a subset of $S$.
Then the inclusion map $A_T \hookrightarrow A$ always admits a retraction $\pi_T : A \to A_T$ which sends $s$ to $s$ if $s \in T$, and sends $s$ to $1$ if $s \not \in T$.
\end{itemize}
\end{rem}

\subsection{Britton's lemma} 

Let $G$ be a group generated by a finite set $S$.
We denote by $(S \cup S^{-1})^*$ the free monoid over $S \cup S^{-1}$, that is, the set of words over $S \cup S^{-1}$, and we denote by $(S \cup S^{-1})^* \to G$, $w \mapsto \bar w$, the map that sends a word to the element of $G$ that it represents. 
Recall that a set of \emph{normal forms} for $G$ is a subset $\NN$ of $(S \cup S^{-1})^*$ such that the map $\NN \to G$, $w \mapsto \bar w$, is a one-to-one correspondence.

Let $G$ be a group with two subgroups $A,B\leq G$, and let $\varphi: A\rightarrow B$ be an isomorphism. 
A useful consequence of Britton's lemma yields a set of normal forms for the HNN-extension $G*_{\varphi}=\langle G,t \mid t^{-1}at=\varphi(a), a\in A\rangle$ in terms of a set $\NN$ of normal forms for $G$ and sets of representatives of the cosets of $A$ and $B$ in $G$ (see Lyndon--Schupp \cite{LynSch1}). 
Explicitly, choose a set $T_A$ of representatives of the left cosets of $A$ in $G$ containing $1$, and a set $T_B$ of representatives of the left cosets of $B$ in $G$ also containing $1$.

\begin{prop}[Britton's normal forms]\label{Prop2_1}
Let $\tilde \NN$ be the set of words of the form $w_0 t^{\varepsilon_1} w_1 \cdots t^{\varepsilon_m} w_m$, where $m\ge 0$, $\varepsilon_i \in \{\pm 1 \}$ and $w_i\in \NN$ for all $i$, such that:
\begin{itemize}
\item[(a)]
$\bar w_i \in T_A$ if $\varepsilon_i=-1$, for $i \ge 1$,
\item[(b)]
$\bar w_i \in T_B$ if $\varepsilon_i=1$, for $i \ge 1$,
\item[(c)]
there is no subword of the form $t^{\varepsilon}t^{-\varepsilon}$.
\end{itemize}
Then $\tilde\NN$ is a set of normal forms for the HNN-extension $G*_{\varphi}$.
\end{prop}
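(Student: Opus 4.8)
The plan is to prove that the evaluation map $\tilde\NN \to G*_{\varphi}$, $w \mapsto \bar w$, is a bijection, and I would treat surjectivity and injectivity separately. Surjectivity I would obtain by an explicit rewriting procedure; injectivity I would get by van der Waerden's permutation trick, namely by letting $G*_\varphi$ act on the set $\tilde\NN$ itself. (Since the statement is advertised as a consequence of Britton's Lemma, one may instead take that lemma as known and deduce injectivity from it, as I indicate at the end.)

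First I would prove surjectivity. Given an arbitrary word over $S^{\pm 1} \cup \{t^{\pm 1}\}$ representing a given element, I group its $S^{\pm 1}$-letters into the maximal syllables lying between consecutive occurrences of $t^{\pm 1}$ and replace each such syllable by its $\NN$-normal form, producing a word of the shape $w_0 t^{\varepsilon_1} w_1 \cdots t^{\varepsilon_m} w_m$ with $w_i \in \NN$. I then alternate two moves. \emph{Pinch removal}: a factor $t^{-1} w_i\, t$ with $\bar w_i \in A$ is replaced by $\varphi(\bar w_i) \in B$ via the relation $t^{-1} a t = \varphi(a)$, and symmetrically $t\, w_i\, t^{-1}$ with $\bar w_i \in B$ by $\varphi^{-1}(\bar w_i)$; each such move strictly lowers the number of occurrences of $t^{\pm 1}$. \emph{Coset normalization}: each constrained block is written as a coset representative times a subgroup element, the subgroup element being absorbed across the neighbouring stable letter by the same relation and re-normalized in $\NN$, the precise choice of side and of transversal being exactly the one forced by (a) and (b). Since pinch removal never increases, and eventually decreases, the $t$-length, and the coset normalization can be organised (by sweeping in a fixed direction) so as to terminate, the word is brought into a member of $\tilde\NN$, whence every element of $G*_\varphi$ is represented.

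For injectivity I would run van der Waerden's argument on $\Omega = \tilde\NN$. Each $g \in G$ acts by left multiplication into the leading block, $g \cdot (w_0 t^{\varepsilon_1}\cdots) = v\, t^{\varepsilon_1}\cdots$ where $v \in \NN$ is the normal form of $\overline{g w_0}$; this is a genuine $G$-action because $\NN$ is a set of normal forms for $G$. The letter $t^{\pm 1}$ acts by prepending $t^{\pm 1}$ followed by a single normalization step, with a short case analysis according to the sign of $\varepsilon_1$ and to whether the leading coset factor of $w_0$ lies in $A$ or in $B$ (this is precisely where conditions (a)--(c) are used to keep the result inside $\Omega$). Granting that these permutations satisfy the relations $t^{-1} a t = \varphi(a)$, the assignment extends to a homomorphism $\rho : G*_\varphi \to \mathrm{Sym}(\Omega)$. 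A straightforward induction on $m$ then shows $\rho(\bar u)(\emptyset) = u$ for every $u \in \tilde\NN$, where $\emptyset$ is the normal form of $1$; hence $\bar u = \bar v$ forces $u = \rho(\bar u)(\emptyset) = \rho(\bar v)(\emptyset) = v$, giving injectivity.

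\textbf{The main obstacle} I anticipate is the well-definedness of the $t^{\pm 1}$-action and the verification that it respects every relation $t^{-1} a t = \varphi(a)$: this is exactly the step where the coset conventions of (a), (b) and the no-pinch condition (c) must be matched, and where a careless case split either leaves $\Omega$ or breaks a relation. If one prefers to avoid this bookkeeping, the quickest route, consistent with the way the proposition is introduced, is to invoke Britton's Lemma: conditions (a)--(c) guarantee that each word of $\tilde\NN$ is reduced, so two words of $\tilde\NN$ representing the same element must have the same length $m$ and the same sign pattern $(\varepsilon_i)$; the coset conditions (a), (b) then pin down each $\bar w_i$ uniquely and $w_i \in \NN$ pins down the word $w_i$, so the two words coincide.
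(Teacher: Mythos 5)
Your proposal is sound, but it proves considerably more than the paper does: the paper gives no proof of Proposition \ref{Prop2_1} at all, presenting it as a known consequence of Britton's lemma with a citation to Lyndon--Schupp \cite{LynSch1}, so the authors' route is exactly your one-line fallback, while your main argument (a terminating rewriting procedure for surjectivity, van der Waerden's permutation trick for injectivity) is the standard self-contained proof of the normal form theorem for HNN extensions. What the paper's choice buys is brevity, since the proposition is pure bookkeeping on top of \cite{LynSch1} and serves only as input to Lemma \ref{Lem3_5}; what yours buys is independence from the literature, at the price of the case analysis you rightly flag as the danger point. If you write it out, three details matter. First, since conditions (a)--(b) attach the transversal to the sign of the \emph{preceding} stable letter, the subgroup factor of $\bar w_i$ must be split off on the left and absorbed \emph{leftward}, via $t^{-1}a=\varphi(a)t^{-1}$ for $a\in A$ and $tb=\varphi^{-1}(b)t$ for $b\in B$; equivalently $T_A$, $T_B$ must be transversals of the cosets $Ag$, $Bg$ (this is how the paper uses them in Lemma \ref{Lem3_5}, where $\Ker(\pi_{S_i})$ represents the cosets $A_{S_i}g$), whereas rightward absorption would tie the coset conditions to the following sign and yield a different normal form than the one stated. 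Second, in your fallback, ``same $t$-length and sign pattern'' is not a formal consequence of reducedness alone: one applies Britton's lemma to $uv^{-1}$, observes that the only possible pinch sits at the junction $t^{\varepsilon_m}(w_m v_n^{-1})t^{-\delta_n}$, deduces $\varepsilon_m=\delta_n$ and $\bar w_m\bar v_n^{-1}\in A$ (or $B$), hence $\bar w_m=\bar v_n$ by the transversal condition, and inducts. Third, condition (c) encodes ``no pinch'' only because a pinch $t^{-1}w_it$ forces $\bar w_i\in T_A\cap A=\{1\}$ and the $\NN$-normal form of $1$ is the empty word (true for shortlex, the paper's choice); your base point $\emptyset$ in the van der Waerden step relies on the same convention.
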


\subsection{Variations of the even Artin-type relations}\label{SubSec2_3}

For $a,b$ in a group $G$, we denote by $b^a = a^{-1} b a$ the conjugate of $b$ by $a$.
The aim of this subsection is to illustrate how the Artin relations in even Artin groups can be expressed in terms of conjugates. 
This observation will be a key point in our proof of Theorem \ref{Thm3_17}.

Let $s,t$ be two generators of an Artin group $A$.
The relation $st=ts$ can be expressed as $t^{s}=t$.
Analogously, from $tsts=stst$, we obtain $s^{-1}tst=tsts^{-1}$, and therefore $t^{s^{-1}}=t^{-1}t^{s}t$ and $t^{s^{2}}=t^{s}t(t^{s})^{-1}$. 
We deduce that $t^{s^{-1}},t^{s^{2}} \in \langle t,t^{s}\rangle$.
Doing the same thing with the relation $tststs=ststst$ one easily gets
\[
t^{s^{-1}}=t^{-1}\,(t^{s})^{-1}\,t^{s^{2}}\,t^{s}\,t \quad \text{and} \quad
t^{s^{3}}=t^{s^{2}}\,t^{s}\,t\, (t^{s})^{-1}\,(t^{s^{2}})^{-1}\,.
\]
This can be extended to any even Artin-type relation $(ts)^{k}=(st)^{k}$ giving
\begin{equation}\label{Eq2_1}
\begin{array}{c}
t^{s^{-1}} = t^{-1}\, (t^{s})^{-1} \cdots (t^{s^{k-2}})^{-1}\, t^{s^{k-1}}\, t^{s^{k-2}} \cdots t^{s}\, t\,,\\
t^{s^{k}} = t^{s^{k-1}} \cdots t^{s}\, t\, (t^{s})^{-1} \cdots (t^{s^{k-1}})^{-1}\,.
\end{array}
\end{equation}
As a consequence we see that $t^{s^i} \in \langle t, t^{s}, t^{s^{2}}, \dots, t^{s^{k-1}}\rangle$ for any integer $i$.


\section{Poly-freeness of even Artin groups of FC type}

In this section we prove that even Artin groups of FC type are poly-free (Theorem \ref{Thm3_17}). 
We begin with a characterization of these groups in terms of their defining graphs.

\begin{lem}\label{Lem3_1}
Let $A_\Gamma$ be an even Artin group. 
Then $A_{\Gamma}$ is of FC type if and only if every triangular subgraph of $\Gamma$ has at least two edges labelled with $2$.
\end{lem}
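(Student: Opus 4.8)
The plan is to characterize FC type combinatorially by analyzing which free-of-infinity subsets $T$ give finite Coxeter groups $W_{\Gamma_T}$, and then reduce the whole condition to a statement about triangles. Recall that $A_\Gamma$ is of FC type if and only if $A_T$ is of spherical type (equivalently $W_{\Gamma_T}$ is finite) for every free-of-infinity subset $T \subseteq S$. Since the property ``free of infinity'' is inherited by subsets and ``$W_{\Gamma_T}$ finite'' is likewise inherited by standard parabolic subgroups, the obstruction to FC type is detected on minimal bad configurations. So **first** I would set up the equivalence: $A_\Gamma$ fails to be FC type exactly when some free-of-infinity $T$ has $W_{\Gamma_T}$ infinite, and I want to show the smallest such $T$ is a triangle.

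The key computation is the rank-$3$ case, so **next** I would examine a triangular subgraph on vertices $\{r,s,t\}$ with all three labels finite (hence the triangle is free of infinity) and even, say $2k_{rs}, 2k_{st}, 2k_{rt}$. For a rank-$3$ Coxeter group the finiteness criterion is classical: writing the labels as $m_1,m_2,m_3 \ge 2$, the group is finite if and only if $\frac{1}{m_1}+\frac{1}{m_2}+\frac{1}{m_3} > 1$. Since all labels here are even, the only even triples satisfying this strict inequality are those with at least two labels equal to $2$: if at most one label is $2$, the remaining two are $\ge 4$, giving $\frac{1}{m_1}+\frac{1}{m_2}+\frac{1}{m_3} \le \frac{1}{2}+\frac{1}{4}+\frac{1}{4} = 1$, which is not strictly greater than $1$. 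Conversely, two labels equal to $2$ (with the third even, possibly $\infty$) give a finite group of type $B_2\times A_1$ or $A_1\times A_1\times A_1$ or similar, hence spherical. This pins down the triangle condition exactly.

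**Then** I would run the two implications. For the forward direction, if $A_\Gamma$ is FC type, every free-of-infinity triangle gives a finite Coxeter group, so by the rank-$3$ computation it has at least two edges labelled $2$; a triangle with a label $\infty$ is not free of infinity and imposes no constraint, but such a triangle automatically has that $\infty$-edge, and I must check the statement still reads correctly (the condition ``at least two edges labelled $2$'' should be interpreted for all triangles — here I would confirm that a triangle containing an $\infty$ edge is vacuously fine or handle it by noting its free-of-infinity subsets are of rank $\le 2$). For the converse, I would show that if every triangle has at least two $2$-edges, then every free-of-infinity $T$ yields a finite Coxeter group. The natural tool is the theory of finite Coxeter groups: an even Coxeter group all of whose rank-$3$ parabolics are finite of the above type decomposes, via the $2$-labelled edges, into a direct product of dihedral and smaller pieces. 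Concretely, the abundance of commuting ($m=2$) pairs forces $\Gamma_T$ to be a join-like configuration whose Coxeter group is a direct product of spherical ones.

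**The main obstacle** will be the converse direction in higher rank: knowing every triangle is good does not immediately give that an arbitrary free-of-infinity $\Gamma_T$ has finite Coxeter group, because finiteness of a Coxeter group is not purely a rank-$3$-local condition in general. The even hypothesis is what saves this — I expect the heavy use of condition (2) in the earlier remark (retraction onto parabolics) together with a structural claim that an even, free-of-infinity graph in which every triangle carries two $2$-edges must be a ``$2$-join,'' i.e. it splits as sets $T = T_1 \sqcup T_2$ with every cross-edge labelled $2$, so that $W_{\Gamma_T} = W_{\Gamma_{T_1}} \times W_{\Gamma_{T_2}}$ and induction on $|T|$ applies. Establishing this decomposition lemma cleanly — ruling out, via the triangle condition, any free-of-infinity subgraph with two ``heavy'' (label $\ge 4$) edges sharing a vertex that would obstruct the product structure — is the part I would spend the most care on.
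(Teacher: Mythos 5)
Your proposal is correct, but it takes a genuinely different route from the paper. The paper makes a single appeal to the classification of finite Coxeter groups to extract the characterization: an \emph{even} Artin group $A_\Gamma$ is of spherical type if and only if $\Gamma$ is complete and each vertex meets at most one edge with label greater than $2$; both directions of the lemma then fall out immediately (a triangle violating the condition sits inside a spherical parabolic and contradicts the characterization, and conversely every complete subgraph satisfying the triangle condition matches the characterization). You instead prove the rank-$3$ case by hand via the classical triangle-group criterion $\frac{1}{m_1}+\frac{1}{m_2}+\frac{1}{m_3}>1$ and handle higher rank by a $2$-join decomposition with induction on $|T|$. Your approach is more self-contained --- it needs only the spherical/Euclidean/hyperbolic trichotomy for rank $3$ rather than the full classification --- at the cost of the extra decomposition step; the paper's is shorter but leans on the classification wholesale (in effect, on the fact that the only even irreducible finite Coxeter groups are $A_1$ and the even dihedral groups). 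Two remarks that simplify your write-up: the ``main obstacle'' you flag is actually a one-line argument, since in a complete graph two heavy edges (label $\ge 4$) sharing a vertex span a triangle with at most one $2$-edge, so the triangle condition forces heavy edges to form a matching, and splitting off a single heavy edge $\{s,t\}$ (all cross-labels to it being $2$) gives $W_{\Gamma_T}\cong W_{\{s,t\}}\times W_{\Gamma_{T\setminus\{s,t\}}}$ and the induction closes; and your worry about triangles containing an $\infty$-label is vacuous, because in this paper's convention an edge of $\Gamma$ exists only when $m_{s,t}\neq\infty$, so every triangular subgraph is automatically free of infinity.
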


\begin{proof}
From  the  classification of finite Coxeter groups (see Bourbaki \cite{Bourb1}, for example) follows that an even Artin group $A_{\Gamma}$ is of spherical type if and only if $\Gamma$ is complete and for each vertex there is at most one edge with label greater than 2 involving it. 
Suppose that $A_{\Gamma}$ is of FC type. 
Let $\Omega$ be a triangular subgraph of $\Gamma$. 
Then $A_{\Omega}$ is even and of spherical type, hence, by the above, $\Omega$ has at least two edges labelled with $2$. 
Suppose now that every triangular subgraph of $\Gamma$ has at least two edges labelled with $2$. 
Let $\Omega$ be a complete subgraph of $\Gamma$. 
Then $\Omega$ is even and every triangular subgraph of $\Omega$ has at least two edges labelled with $2$, hence, by the above, $A_{\Omega}$ is of spherical type. 
So, $A_{\Gamma}$ is of FC type.
\end{proof}

The proof of Theorem \ref{Thm3_17} is based on the following.

\begin{prop}\label{Prop3_2}
Assume that $A_\Gamma$ is even and of FC type. 
Then there is a free group $F$ such that $A_\Gamma=F\rtimes A_1$, where $A_1$ is an even Artin group of FC type based on a proper subgraph of $\Gamma$.
\end{prop}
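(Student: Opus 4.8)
The plan is to build the semidirect decomposition from a single vertex deletion and then to identify the resulting kernel with an explicitly described free group. First I would fix a vertex $v\in S$ and set $\Gamma_1=\Gamma_{S\setminus\{v\}}$ and $A_1=A_{\Gamma_1}$. Since $\Gamma_1$ is a full subgraph of $\Gamma$, every triangle of $\Gamma_1$ is a triangle of $\Gamma$, so by Lemma \ref{Lem3_1} the group $A_1$ is again even and of FC type, and $\Gamma_1$ is proper. The canonical retraction $\pi_{\Gamma_1}\colon A_\Gamma\to A_1$ (fixing each $t\neq v$ and sending $v\mapsto 1$) splits the inclusion $A_1\hookrightarrow A_\Gamma$, so setting $F=\Ker(\pi_{\Gamma_1})$ we immediately obtain $A_\Gamma=F\rtimes A_1$. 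Thus the entire content of the proposition is reduced to the single assertion that $F$ is free.

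Next I would describe $F$ concretely. As the normal closure $\langle\langle v\rangle\rangle$ of $v$, and because $A_\Gamma=FA_1$, the subgroup $N=\langle v^{a}\mid a\in A_1\rangle$ already contains $v$ and is carried into itself by every generator of $A_\Gamma$ (by a generator $t\neq v$ since conjugation sends $v^{a}$ to $v^{at}$ with $at\in A_1$, and by $v$ since $v\in N$ and $N$ is a subgroup); hence $N$ is normal and $F=N$. So $F$ is generated by the $A_1$-conjugates of $v$. The relations \eqref{Eq2_1}, read with the roles of the two letters exchanged, show that for each neighbour $t$ of $v$ with $m_{v,t}=2k$ one has $v^{t^{i}}\in\langle v,v^{t},\dots,v^{t^{k-1}}\rangle$ for every integer $i$; in other words these relations do not constrain $v,v^{t},\dots,v^{t^{k-1}}$ but merely express all remaining powers in terms of them. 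This suggests introducing an abstract free group $\hat F$ on a family of symbols indexed by the ``reduced'' conjugates of $v$ (one orbit representative for each neighbour, together with the free branching produced by non-adjacent generators), equipping $\hat F$ with the $A_1$-action dictated by \eqref{Eq2_1}, and proving that the obvious map $\hat F\rtimes A_1\to A_\Gamma$, sending the basic symbol to $v$ and $A_1$ identically, is an isomorphism. Since $\hat F$ is free by construction, this yields the freeness of $F$.

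The isomorphism would be established by exhibiting mutually inverse homomorphisms: the map $A_\Gamma\to\hat F\rtimes A_1$ is well defined once one checks that each even Artin relation $(vt)^{k}=(tv)^{k}$ holds in $\hat F\rtimes A_1$, which is a direct translation of \eqref{Eq2_1}, while the map $\hat F\rtimes A_1\to A_\Gamma$ is well defined because $\hat F$ is free and the chosen action agrees with conjugation inside $A_\Gamma$. Should control over normal forms be required to rule out collapsing, I would invoke Britton's normal forms (Proposition \ref{Prop2_1}), presenting $A_\Gamma$ as obtained from $A_1$ by adjoining $v$ and reading off a transversal for $F$.

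I expect the main obstacle to be proving that the prescribed $A_1$-action on $\hat F$ is genuinely well defined, that is, that the relations $(tt')^{k''}=(t't)^{k''}$ among adjacent generators of $A_1$ act as the identity of $\operatorname{Aut}(\hat F)$; equivalently, that reducing an arbitrary conjugate $v^{a}$ by means of \eqref{Eq2_1} is confluent and imposes no hidden relation among the free generators. This is exactly the point at which the FC hypothesis enters through Lemma \ref{Lem3_1}: in any triangle on $v$ and two of its neighbours $t,t'$, at least two edges carry the label $2$, so at least one of $t,t'$ commutes with $v$ and therefore acts trivially on the conjugates produced by the other. It is this fact that prevents two high-label neighbours of $v$ joined by an edge from forcing relations among the conjugates of $v$, and hence guarantees that $F$ is free; without the FC condition the reduction would not be consistent and $F$ could fail to be free.
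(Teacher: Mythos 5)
Your global strategy coincides with the paper's: delete one vertex (your $v$, the paper's $z$), use the evenness retraction to split $A_\Gamma=\Ker(\pi_1)\rtimes A_1$, note via Lemma \ref{Lem3_1} that $A_1$ is again even of FC type, identify the kernel with the subgroup generated by the conjugates $v^g$, and then realize it as an abstract free group carrying an $A_1$-action dictated by \eqref{Eq2_1}, with the isomorphism checked by a pair of mutually inverse homomorphisms and Britton normal forms. This is precisely the outline of Lemmas \ref{Lem3_4}--\ref{Lem3_15} and Proposition \ref{Prop3_16}, and your identification of where the FC hypothesis must enter is the right instinct.

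However, there is a genuine gap at exactly the point you flag as ``the main obstacle'': defining the basis and verifying that the action is well defined is the entire content of the proof, and your sketch both leaves this undone and misdescribes the mechanism. First, the index set of the basis is not ``one orbit representative per neighbour with free branching'': the paper takes $T=T_0\,\Ker(\pi_L)$, where $T_0$ is cut out by Britton normal forms for $A_L$ coming from the iterated HNN decomposition of Lemma \ref{Lem3_4}, which in turn rests on Lemma \ref{Lem3_3} (the high-label neighbours of the deleted vertex are pairwise non-adjacent and commute with their link neighbours); without this precise description one cannot even write the action down (the map $h\mapsto u(hs)$ of Lemma \ref{Lem3_6} and the exceptional case $b_{x^{k_x-1}}*x$). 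Second, your stated reason that FC forces consistency --- ``at least one of $t,t'$ commutes with $v$ and therefore acts trivially on the conjugates produced by the other'' --- is wrong as stated: if $t$ commutes with $v$, conjugation by $t$ sends $v^{t'^{\,i}}$ to $v^{t'^{\,i}t}$, which equals $v^{t'^{\,i}}$ only if $t$ \emph{also} commutes with $t'$; that second commutation is Lemma \ref{Lem3_3}(2) and requires $k_{t'}>1$. Worse, the hardest relation check (Lemma \ref{Lem3_9}) concerns two vertices that \emph{both} commute with $v$ but are joined by an edge labelled $2k>2$ --- a configuration about which your triangle heuristic says nothing. Third, you must also handle relations involving generators of $A_1$ outside the link of $v$; the paper avoids a generator-by-generator verification there only because the extension of the $A_L$-action on $F_0$ to an $A_1$-action on $F$ is given by the closed formula \eqref{Eq3_2} through the retraction $\pi_L$, a device absent from your plan. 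In short, your proposal is a correct road map of the paper's argument, but the substance of the proof --- Lemmas \ref{Lem3_5} through \ref{Lem3_13} --- remains to be supplied.
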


We proceed now with some notations needed for the proof of Proposition \ref{Prop3_2}. 
Fix some vertex $z$ of $\Gamma$.  
Recall that the \emph{link} of $z$ in $\Gamma$ is the full subgraph $\lk (z,\Gamma)$ of $\Gamma$ with vertex set $V(\lk(z,\Gamma))= \{s \in S \mid s \neq z \text{ and } m_{s,z} \neq \infty \}$.
As ever, we see $\lk(z,\Gamma)$ as a labelled graph, where the labels are the same as in the original graph $\Gamma$.
We set $L=\lk(z,\Gamma)$, and we denote by $\Gamma_1$ the full subgraph of $\Gamma$ spanned by $S \setminus \{z\}$.
We denote by $A_1$ and $A_L$ the subgroups of $A_\Gamma$ generated by $V(\Gamma_1)$ and $V(L)$, respectively. 
Recall from Section \ref{Sec2} that $A_1$ and $A_L$ are the Artin groups associated with the graphs $\Gamma_1$ and $L$, respectively (so this notation is consistent).

As pointed out in Section \ref{Sec2}, since $A_\Gamma$ is even, the inclusion map $A_1 \hookrightarrow A$ has a retraction $\pi_1: A_\Gamma \to A_1$ which sends $z$ to $1$ and sends $s$ to $s$ if $s \neq z$.
Similarly, the inclusion map $A_L \hookrightarrow A_1$ has a retraction $\pi_L : A_1 \to A_L$ which sends $s$ to $s$ if $s \in V(L)$, and sends $s$ to $1$ if $s \not\in V(L)$. 
It follows that $A_\Gamma$ and $A_1$ split as semi-direct products $A_\Gamma = \Ker (\pi_1) \rtimes A_1$ and $A_1= \Ker (\pi_L) \rtimes A_L$.

For $s \in V(L)$ we denote by $k_s$ the integer such that $m_{z,s}=2k_s$.
Lemma \ref{Lem3_1} implies the following statement.
This will help us to describe $A_L$ as an iterated HNN extension.

\begin{lem}\label{Lem3_3}
Let $s,t$ be two linked vertices of $L$.
\begin{itemize}
\item[(1)]
Either $k_s=1$, or $k_t=1$.
\item[(2)]
If $k_s >1$, then $m_{s,t}=2$.
\end{itemize}
\end{lem}

Let $L_1$ be the full subgraph of $L$ spanned by the vertices $s \in V(L)$ such that $k_s=1$.
Lemma \ref{Lem3_3} implies that $L\setminus L_1$ is totally disconnected. 
We set $V(L\setminus L_1)=\{x_1,\ldots,x_n\}$.
Again, from Lemma \ref{Lem3_3}, we deduce that, for each $i\in \{1, \dots,n\}$, if the vertex $x_i$ is linked to some vertex $s \in V(L_1)$, then the label of the edge between $x_i$ and $s$ must be $2$. 
For each $i \in \{1, \dots, n \}$ we set $S_i=\lk (x_i,L)$, and we denote by $X_i$ the full subgraph of $L$ spanned by $\{x_1,\ldots, x_i\} \cup V(L_1)$ and $X_0=L_1$.
Note that $S_i$ is a subgraph of $L_1$ and, therefore, is a subgraph of $X_i$. The subgraphs of $\Gamma$ that we have defined so far are sitting as follows inside $\Gamma$
$$S_i\subseteq L_1=X_0\subseteq X_1\subseteq\ldots\subseteq X_n= L\subseteq\Gamma_1\subseteq\Gamma$$
where $i\in\{1,\ldots,n\}$
The defining map of each of the HNN extensions will be the identity in the subgroup generated by the vertices commuting with $x_i$, that is, $\varphi_i= \Id :A_{S_i}\to A_{S_i}$. 
Then, writing down the associated presentation, we see that $A_{X_i}=(A_{X_{i-1}})*_{\varphi_i}$ with stable letter $x_i$. 
So, we get the following.

\begin{lem}\label{Lem3_4}
We have $A_L = ((A_{L_1} *_{\varphi_1} ) *_{\varphi_2} \cdots ) *_{\varphi_n}$.
\end{lem}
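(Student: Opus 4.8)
The plan is to reduce the iterated statement to a single HNN step, namely the identification $A_{X_i}=A_{X_{i-1}}*_{\varphi_i}$ with stable letter $x_i$, and then induct on $i$, using $X_0=L_1$ and $X_n=L$. Thus the whole argument rests on establishing, for each $i\in\{1,\dots,n\}$, that adjoining the vertex $x_i$ to $A_{X_{i-1}}$ together with the Artin relations it introduces yields exactly the HNN extension with associated subgroup $A_{S_i}$ and defining isomorphism $\varphi_i=\Id$.

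First I would write down the presentation of $A_{X_i}$ directly from the definition of an even Artin group: its generators are $V(X_{i-1})\cup\{x_i\}$, and its relations are those of $A_{X_{i-1}}$ together with the Artin relations $\Pi(x_i,s:m_{x_i,s})=\Pi(s,x_i:m_{x_i,s})$ for each vertex $s$ of $X_i$ linked to $x_i$. The key point, and the only place Lemma \ref{Lem3_3} really enters, is to identify these extra relations. Since $x_i\in V(L\setminus L_1)$ we have $k_{x_i}>1$, so by Lemma \ref{Lem3_3}(2) every edge of $L$ incident to $x_i$ carries the label $2$; moreover the vertices $x_1,\dots,x_n$ are pairwise non-adjacent because $L\setminus L_1$ is totally disconnected, so $x_i$ is linked in $X_i$ only to vertices of $L_1$, namely exactly to $V(S_i)=V(\lk(x_i,L))$. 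Consequently the extra relations are precisely $x_is=sx_i$, equivalently $x_i^{-1}sx_i=s$, for $s\in V(S_i)$.

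Next I would compare this with the standard presentation of the HNN extension $A_{X_{i-1}}*_{\varphi_i}$. Since $S_i\subseteq L_1=X_0\subseteq X_{i-1}$, the subgroup $A_{S_i}$ is a standard parabolic subgroup of $A_{X_{i-1}}$, hence embeds and is the Artin group on $S_i$ by van der Lek \cite{Lek1}, while $\varphi_i=\Id$ is an isomorphism of $A_{S_i}$ onto itself; so the HNN extension is legitimate. Its defining relations $x_i^{-1}ax_i=\varphi_i(a)=a$ for $a\in A_{S_i}$ are equivalent, on generators, to $x_i^{-1}sx_i=s$ for $s\in V(S_i)$, which is exactly the set of extra relations found above. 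A Tietze comparison then gives an isomorphism $A_{X_i}\cong A_{X_{i-1}}*_{\varphi_i}$ fixing the common generators.

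Finally, iterating this identification for $i=1,\dots,n$ and using $A_{X_0}=A_{L_1}$ together with $A_{X_n}=A_L$ yields $A_L=((A_{L_1}*_{\varphi_1})*_{\varphi_2}\cdots)*_{\varphi_n}$. I do not expect a serious obstacle here, since the argument is essentially a presentation manipulation; the only delicate points are (i) checking via Lemma \ref{Lem3_3} that the newly introduced relations are pure commutations with the vertices of $S_i$ and nothing more (in particular that no relation mixes $x_i$ with $x_1,\dots,x_{i-1}$), and (ii) invoking van der Lek to ensure $A_{S_i}$ sits inside $A_{X_{i-1}}$ as the genuine Artin group on $S_i$, so that the HNN extension is well defined.
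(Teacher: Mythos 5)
Your proposal is correct and takes essentially the same route as the paper: the paper also obtains $A_{X_i}=(A_{X_{i-1}})*_{\varphi_i}$ with stable letter $x_i$ by writing down the presentation and using Lemma \ref{Lem3_3} (together with the total disconnectedness of $L\setminus L_1$) to see that the only new relations are commutations of $x_i$ with the vertices of $S_i$, and then iterates from $X_0=L_1$ up to $X_n=L$. Your additional remarks---that no relation mixes $x_i$ with $x_1,\dots,x_{i-1}$, and that van der Lek's theorem identifies $A_{S_i}$ as the genuine Artin group on $S_i$---merely make explicit what the paper leaves implicit.
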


Now, fix a set $\NN_1$ of normal forms for $A_{L_1}$ (for example, the set of shortlex geodesic words with respect to some ordering in the standard generating system). 
We want to use Britton's lemma to obtain a set of normal forms for $A_L$ in terms of $\NN_1$.  
To do so, first, for each $i\in \{1, \dots,n\}$, we need to determine a set of representatives of the right cosets of $A_{S_i}$ in $A_{L_1}$. 
The natural way to do it is as follows. 
Consider the projection map $\pi_{S_i}: A_{L_1} \to A_{S_i}$ which sends $s\in V(L)$ to $s$ if $s \in V(S_i)$ and sends $s$ to $1$ otherwise. 
Observe that $A_{L_1}=A_{S_i}\ltimes\Ker(\pi_{S_i})$. 
Then, $\Ker(\pi_{S_i})$ is a well-defined set of representatives of the right cosets of $A_{S_i}$ in $A_{L_1}$.

In our next result we will use this set of representatives together with Britton's lemma to construct a set $\NN_L$ of normal forms  for $A_L$.
More precisely, $\NN_L$ denotes the set of words of the form
\[
w_0 x_{\alpha_1}^{\varepsilon_1} w_{1} \cdots x_{\alpha_m}^{\varepsilon_m} w_{m} \,,
\]
where $w_j \in \NN_1$ for all $j \in \{0,1, \dots, m\}$, $\alpha_j \in \{1, \dots, n\}$, $\bar w_j \in \Ker(\pi_{S_{\alpha_j}})$ and $\varepsilon_j \in \{\pm 1\}$ for all $j \in \{1, \dots, m\}$, and there is no subword of the form $x_{\alpha}^{\varepsilon} x_\alpha^{-\varepsilon}$ with $\alpha \in \{ 1, \dots, n\}$. 

\begin{lem}\label{Lem3_5}
The set $\NN_L$ is a set of normal forms for $A_{L}$.
\end{lem}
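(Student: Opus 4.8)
The plan is to prove the statement by induction on $i$, establishing that for each $i \in \{0,1,\dots,n\}$ the analogously defined set $\NN_{X_i}$ of words
\[
\NN_{X_i}=\{\, w_0\, x_{\alpha_1}^{\varepsilon_1}\, w_1 \cdots x_{\alpha_m}^{\varepsilon_m}\, w_m \,\}\,,
\]
with $w_j \in \NN_1$, $\alpha_j \in \{1,\dots,i\}$, $\bar w_j \in \Ker(\pi_{S_{\alpha_j}})$ for $j \ge 1$, $\varepsilon_j \in \{\pm 1\}$, and no subword $x_\alpha^\varepsilon x_\alpha^{-\varepsilon}$, is a set of normal forms for $A_{X_i}$. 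The case $i=0$ is exactly the hypothesis that $\NN_1$ is a set of normal forms for $A_{X_0}=A_{L_1}$, and the case $i=n$ is the assertion that $\NN_L=\NN_{X_n}$ is a set of normal forms for $A_{X_n}=A_L$.

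For the inductive step, assuming $\NN_{X_{i-1}}$ is a set of normal forms for $A_{X_{i-1}}$, I would apply Britton's normal form (Proposition \ref{Prop2_1}) to the HNN-extension $A_{X_i}=(A_{X_{i-1}})*_{\varphi_i}$ of Lemma \ref{Lem3_4}, taking $\NN_{X_{i-1}}$ as the chosen set of normal forms for the base group, with $\varphi_i=\Id$ on $A=B=A_{S_i}$. The point that needs care is that Proposition \ref{Prop2_1} requires a transversal of $A_{S_i}$ in the whole group $A_{X_{i-1}}$, whereas $\Ker(\pi_{S_i})$ has only been exhibited as a transversal of $A_{S_i}$ in the much smaller group $A_{L_1}$. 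The heart of the argument is therefore to lift the latter to a transversal of the correct shape in $A_{X_{i-1}}$.

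To do so I would exploit the inclusion $A_{S_i}\subseteq A_{L_1}$ together with the fact that, in any word of $\NN_{X_{i-1}}$, the leading factor $w_0$ lies in $A_{L_1}$ and stands to the left of every stable letter. Concretely, for $g\in A_{X_{i-1}}$ with normal form $w_0 x_{\alpha_1}^{\varepsilon_1} w_1 \cdots$, left multiplication by $a\in A_{S_i}$ affects only the leading factor, replacing $w_0$ by the $\NN_1$-normal form of $\overline{a w_0}$ and leaving the rest of the word, hence all its coset conditions and its reducedness, untouched. Thus the coset $A_{S_i}g$ is determined by the tail $x_{\alpha_1}^{\varepsilon_1} w_1 \cdots$ together with the coset $A_{S_i}\bar w_0$ in $A_{L_1}$; since $\Ker(\pi_{S_i})$ meets each such coset of $A_{L_1}$ exactly once, the set $T$ of normal forms of $A_{X_{i-1}}$ whose leading factor represents an element of $\Ker(\pi_{S_i})$ is a transversal of $A_{S_i}$ in $A_{X_{i-1}}$ that contains the empty word. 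I would feed $T_A=T_B=T$ into Proposition \ref{Prop2_1}.

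Finally I would identify the resulting Britton normal forms with $\NN_{X_i}$. A Britton normal form $v_0 x_i^{\delta_1} v_1 \cdots x_i^{\delta_p} v_p$ with $v_j\in\NN_{X_{i-1}}$ and $\bar v_j\in T$ for $j\ge 1$ flattens, upon writing out each $v_j$, to a word of $\NN_{X_i}$: the condition $\bar v_j\in T$ says precisely that the block following each $x_i^{\delta_j}$ lies in $\Ker(\pi_{S_i})$, the interior conditions and reducedness within each $v_j$ are inherited, and since $x_i$ occurs in no $v_j$ no new cancellation $x_\alpha^\varepsilon x_\alpha^{-\varepsilon}$ can be created across a junction. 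Conversely, grouping a word of $\NN_{X_i}$ at its occurrences of $x_i$ recovers a unique word of the Britton type, so flattening is a bijection $\NN_{X_i}\leftrightarrow\tilde\NN$ that preserves the represented group element; as Proposition \ref{Prop2_1} guarantees $\tilde\NN$ is a set of normal forms for $A_{X_i}$, so is $\NN_{X_i}$. I expect the main obstacle to be exactly this transversal lift and the bookkeeping it demands: one must check that passing between the genuinely iterated HNN normal forms and the single flat list $\NN_{X_i}$ neither loses the coset normalization at the newly added letter $x_i$ nor introduces spurious reductions among the earlier letters, and that the transversal in $A_{X_{i-1}}$ is indeed governed by cosets in $A_{L_1}$ alone.
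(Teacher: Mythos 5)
Your proposal is correct and follows essentially the same route as the paper: induction on $i$ via the iterated HNN decomposition $A_{X_i}=(A_{X_{i-1}})*_{\varphi_i}$ of Lemma \ref{Lem3_4}, applying Britton's lemma (Proposition \ref{Prop2_1}) with the transversal of $A_{S_i}$ in $A_{X_{i-1}}$ consisting of those normal forms whose leading block $w_0$ represents an element of $\Ker(\pi_{S_i})$, which is exactly the set $T_i$ the paper uses. Your write-up in fact supplies more detail than the paper at the two points it leaves implicit --- why left multiplication by $A_{S_i}$ only rewrites the leading block (so that $T_i$ is indeed a transversal of $A_{S_i}$ in all of $A_{X_{i-1}}$, not just in $A_{L_1}$), and why flattening the Britton forms bijectively yields $\NN_{L,i}$.
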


\begin{proof}
For $i \in \{0,1, \dots, n\}$, we denote by $\NN_{L,i}$ the set of words of the form
\begin{equation}\label{Eq3_1}
w_0 x_{\alpha_1}^{\varepsilon_1} w_{1} \cdots x_{\alpha_m}^{\varepsilon_m} w_{m} \,,
\end{equation}
where $w_j \in \NN_1$ for all $j \in \{0,1, \dots, m\}$, $\alpha_j \in \{1, \dots, i\}$, $\bar w_j \in \Ker(\pi_{S_{\alpha_j}})$ and $\varepsilon_j \in \{\pm 1\}$ for all $j \in \{1, \dots, m\}$, and there is no subword of the form $x_{\alpha}^{\varepsilon} x_\alpha^{-\varepsilon}$ with $\alpha \in \{ 1, \dots, i\}$.
We prove by induction on $i$ that $\NN_{L,i}$ is a set of normal forms for $A_{X_i}$.
Since $A_L=A_{X_n}$, this will prove the lemma.

The case $i=0$ is true by definition since $A_{L_1}=A_{X_0}$ and $\NN_{L,0}=\NN_1$.
So, we can assume that $i \ge 1$ plus the inductive hypothesis.
Recall that $A_{X_i}=(A_{X_{i-1}})*_{\varphi_i}$, where $\varphi_i$ is the identity map on $A_{S_i}$. 
By induction, $\NN_{L,i-1}$ is a set of normal forms for $A_{X_{i-1}}$. 
We want to apply Proposition \ref{Prop2_1}, so we also need a set $T_i$ of representatives of the right cosets of $A_{S_i}$ in $A_{X_{i-1}}$. 
Since $A_{L_1}=A_{S_i}\ltimes\Ker(\pi_{S_i})$, we see that we may take as $T_i$ the set of elements of $A_{X_{i-1}}$ whose normal forms, written as in Equation \ref{Eq3_1}, satisfy $\bar w_0 \in \Ker (\pi_{S_i})$. 
Now, take $g\in A_{X_i}$ and use Proposition \ref{Prop2_1} with the set $\NN_{L,i-1}$ of normal forms and the set $T_i$ of representatives to write a uniquely determined expression for $g$. 
The set of these expressions is clearly $\NN_{L,i}$.
\end{proof}

Given $g\in A_L$, we denote by $n(g)$ the normal form of $g$ in $\NN_L$.

The following sets will be crucial in our argument.	
The set $T_0$ is the set of $g\in A_L$ such that $n(g)$ does not begin with $s^{-1}$ or $s^{k_s}$ for any $s\in V(L)$.
In other words, $T_0$ denotes the set of elements in $A_L$ such that $n(g)$ has $w_0=1$, $\varepsilon_1=1$, and no consecutive sequence of $x_{\alpha_1}$'s of length $k_{x_{\alpha_1}}$ at the beginning. 
On the other hand, we set $T=T_0 \Ker(\pi_L)$.
The reason why these sets are so important to us is that $T$ will serve as index set for a free basis for the free group $F$ of Proposition \ref{Prop3_2}, that is, $F=F(B)$ is the free group with basis a set $B=\{b_g\mid g\in T\}$ in one-to-one correspondence with $T$.
We will also consider a smaller auxiliary free group $F_0=F(B_0)$ with basis $B_0=\{ b_h\mid h\in T_0\}$.
Observe that, since $T_0\subseteq A_L$, we have that any $g\in T$ can be written in a unique way as $g=hu$ with $h\in T_0$ and $u\in\Ker(\pi_L)$.

At this point, we can explain the basic idea of the proof of Proposition \ref{Prop3_2}.
We will define an action of $A_1$ on $F$ and form the corresponding semi-direct product. 
Then we will construct an explicit isomorphism between this semi-direct product and the original group $A_\Gamma$ that takes  $F$ onto $\Ker(\pi_1)$. 
More explicitly, this isomorphism will be an extension of the homomorphism $\varphi: F \to\Ker(\pi_1)$ which sends $b_g$ to $z^g$ for all $g \in T$.

We proceed now to define the action of $A_1$ on $F$. 
This action should mimic the obvious conjugation action of $A_1$ on $\Ker(\pi_1)$. 
As a first step, we start defining an action of $A_L$ on $F_0$. 
This will be later extended to the desired action of $A_1$ on $F$.

Given $h\in T_0$, we denote by $\supp (h)$ the set of $x_i\in V(L\setminus L_1)$ which appear in the normal form $n(h)$.
We will need the following technical result.

\begin{lem}\label{Lem3_6}
Let $s \in V(L)$ and $h\in T_0$. 
\begin{itemize}
\item[(1)]
If $s\in V(L_1)$ and $s \in V(S_i)$ for every $x_i \in \supp(h)$ (including the case $h=1$), then $hs \not \in T_0$ and $hs^{-1} \not \in T_0$, but $shs^{-1}, s^{-1} h s \in T_0$.
\item[(2)]
If $s=x \in V(L \setminus L_1)$ and $h=x^{k_x-1}$, then $hs \not\in T_0$ and $hs^{-1} \in T_0$.
\item[(3)]
If $s=x \in V(L \setminus L_1)$ and $h=1$, then $hs \in T_0$ and $hs^{-1} \not\in T_0$.
\item[(4)]
We have $hs, hs^{-1} \in T_0$ in all the other cases.
\end{itemize}
\end{lem}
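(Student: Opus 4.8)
The plan is to exploit the fact that membership in $T_0$ is a condition on the \emph{beginning} of the normal form only: $g\in T_0$ precisely when $n(g)$ has $w_0=1$, $\varepsilon_1=1$, and an initial run of $x_{\alpha_1}$'s of length strictly less than $k_{x_{\alpha_1}}$. So for each of $hs$ and $hs^{-1}$ I would track how right multiplication by $s^{\pm1}$ alters the \emph{initial} portion of $n(h)$, using the Britton reduction rules behind Lemma \ref{Lem3_5} together with the defining feature of the HNN extensions, namely that $x_i$ commutes with $A_{S_i}$ (since $\varphi_i=\Id$). The two genuinely different situations are $s\in V(L_1)$, where $s$ sits inside the coefficients $w_j$ and must be pushed through the stable letters, and $s=x_i\in V(L\setminus L_1)$, where right multiplication simply appends a stable letter at the tail. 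Throughout I would use that $x_i\in V(L\setminus L_1)$ forces $k_{x_i}\ge 2$, and that, by Lemma \ref{Lem3_3}, $s\in V(S_i)$ is equivalent to $s$ commuting with $x_i$.

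For $s\in V(L_1)$ (parts (1) and (4)) write $n(h)=x_{\alpha_1}^{\varepsilon_1}w_1\cdots x_{\alpha_m}^{\varepsilon_m}w_m$ with $w_0=1$, and consider $hs=x_{\alpha_1}^{\varepsilon_1}w_1\cdots x_{\alpha_m}^{\varepsilon_m}w_m s$. Reading from the right, whenever $s$ meets a stable letter $x_{\alpha_j}$ with $s\in V(S_{\alpha_j})$ it commutes past it, replacing the coefficient $w_j$ by its conjugate $s^{-1}w_j s$, which again lies in $\Ker(\pi_{S_{\alpha_j}})$; it halts at the rightmost index $j_0$ with $s\notin V(S_{\alpha_{j_0}})$, where it is absorbed into the coefficient. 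In part (1), $s\in V(S_{\alpha_j})$ for \emph{every} $j$, so $s$ travels all the way to the front and produces leading term $w_0=s=s^{k_s}$ (as $k_s=1$); hence $hs\notin T_0$, and likewise $hs^{-1}\notin T_0$. For $shs^{-1}$ the same commutations fix the entire $x$-skeleton and only conjugate the coefficients $w_j\mapsto s^{-1}w_j s$, preserving $w_0=1$, $\varepsilon_1=1$, and the length of the initial $x_{\alpha_1}$-run; thus $shs^{-1},s^{-1}hs\in T_0$. In part (4) some $x_{\alpha_j}$ does not commute with $s$, so $s$ stops at $j_0\ge 1$, the leading term stays trivial, and $hs,hs^{-1}\in T_0$.

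For $s=x_i$ right multiplication appends $x_i^{\pm1}$ at the tail of $n(h)$. Away from the beginning this changes only the tail: a no-cancellation append adds a syllable at the end, while the sole nontrivial interaction, a Britton cancellation occurring exactly when $n(h)$ ends in $x_i^{\mp1}$, merely trims a tail syllable. Hence the initial portion of the normal form is affected only when $n(h)$ is a pure power $x_i^{\,j}$, the one situation in which tail and beginning coincide. This isolates the two boundary cases: $h=x_i^{k_i-1}$, where $hs=x_i^{k_i}$ begins with $s^{k_s}$ (so $hs\notin T_0$) while $hs^{-1}=x_i^{k_i-2}\in T_0$ (part (2)); and $h=1$, where $hs=x_i\in T_0$ because $k_i\ge 2$, while $hs^{-1}=x_i^{-1}\notin T_0$ (part (3)). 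In every remaining case the modification stays strictly inside the tail, or extends a run only to a length still below $k_{x_i}$, so $hs,hs^{-1}\in T_0$, which is part (4).

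I expect the main obstacle to be the careful verification, in part (4) with $s\in V(L_1)$, that the initial $x_{\alpha_1}$-run cannot grow to length $k_{x_{\alpha_1}}$ once $s$ is absorbed. The delicate point is that absorbing $s$ alters the triviality status of exactly one coefficient $w_{j_0}$, and one must rule out that this newly (non)trivial separator lengthens the initial run past the critical value. This is where I would invoke that if the run letter $x_{\alpha_1}$ itself fails to commute with $s$, then $j_0$ is at least the run length and the syllable following the run differs from $x_{\alpha_1}$, so the run cannot be extended; otherwise $j_0$ lies strictly beyond the run and the beginning is untouched. The rest is routine bookkeeping with the normal forms of Lemma \ref{Lem3_5}.
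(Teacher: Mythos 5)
Your proposal is correct and follows essentially the same route as the paper's proof: for $s\in V(L_1)$ you push $s$ leftward through the commuting stable letters (conjugating the coefficients, which stay in the kernels $\Ker(\pi_{S_{\alpha_j}})$) until it is absorbed at the last index $j_0$ with $s\notin V(S_{\alpha_{j_0}})$ or reaches the front, and for $s=x_i\in V(L\setminus L_1)$ you append a stable letter at the tail with at most one Britton cancellation, isolating the pure-power cases $h=1$ and $h=x_i^{k_{x_i}-1}$. Your final paragraph even settles a point the paper leaves implicit, namely that an absorbed $s$ cannot lengthen the initial $x_{\alpha_1}$-run to $k_{x_{\alpha_1}}$, since a vanishing coefficient $\bar w_{j_0}s^{\pm 1}=1$ forces $\alpha_{j_0}\neq\alpha_{j_0+1}$.
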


\begin{proof}
Since $h\in T_0$, $h$ has a normal form
\[
x_{\alpha_1}^{\varepsilon_1} w_{1} \cdots x_{\alpha_m}^{\varepsilon_m} w_m\,,
\]
with each $w_j$ the normal form of an element in $\Ker (\pi_{S_{\alpha_j}})$. 
In the case when $s=x \in V(L\setminus L_1)$, multiplying this expression by $x$ on the right, we get, after a possible cancellation $x^{-1}x$, a normal form.
So, $hx\not\in T_0$ if and only if $h=x^{k_x-1}$.
Similarly, we have $hx^{-1}\not\in T_0$ if and only if $h=1$.

Now, suppose that $s\in V(L_1)$.  
Assume that $s\in S_{\alpha_j}$ for $j=j_0+1,\dots,m$, but $s\not\in S_{\alpha_{j_0}}$. 
Then the normal form for $hs$ is
\[
x_{\alpha_1}^{\varepsilon_1} w_{1} \cdots w_{j_0-1} x_{\alpha_{j_0}}^{\varepsilon_{j_0}} w_{j_0}' \cdots x_{\alpha_{m-1}}^{\varepsilon_{m-1}} w_{m-1}' x_{\alpha_m}^{\varepsilon_m} w_m'\,,
\]
where $w_{j_0}'$ is the normal form for $\bar w_{j_0} s$, and, for $j \in \{j_0+1, \dots, m\}$, $w_j'$ is the normal form for $s^{-1} \bar w_j s$.
Hence, $hs\in T_0$. 
Similarly, $hs^{-1} \in T_0$.
On the contrary, assume that $s \in S_{\alpha_j}$ for all those $x_j$ appearing in the normal form for $h$.
Then the normal form for $hs$ is 
\[
s x_{\alpha_1}^{\varepsilon_1} w_1' \cdots x_{\alpha_m}^{\varepsilon_m} w_m'\,,
\]
where $w_j'$ is the normal form for $s^{-1} \bar w_j s$ for all $j \in \{1, \dots, m\}$.
This form begins with $s$, hence $hs\not\in T_0$.
However, the normal form for $s^{-1} h s$ is obtained from the above form by removing the $s$ at the beginning, hence $s^{-1} h s \in T_0$.
Similarly, $h s^{-1} \not\in T_0$ and $s h s^{-1} \in T_0$.
\end{proof}

Hidden in the proof of Lemma \ref{Lem3_6} is the fact that, if $g_1,g_2 \in A_{L_1}$ and $h \in T_0$, then $g_1 h g_2 \in A_{L_1} T_0$.
Moreover, by Lemma \ref{Lem3_5}, every element of $A_{L_1} T_0$ is uniquely written in the form $gh$ with $g \in A_{L_1}$ and $h \in T_0$.
In this case we set $u(gh) = h$.
So, by Lemma \ref{Lem3_6}, if $s \in V(L_1)$ and $h \in T_0$, then $u(hs)= s^{-1} h s$ if $s \in V(S_i)$ for every $x_i \in \supp(h)$, and $u(hs) = hs$ otherwise.
If $s=x \in V(L \setminus L_1)$, then $u(hx)$ is not defined if $h = x^{k_x-1}$, and $u(hx) = hx$ otherwise.

We turn now to define the action of $A_L$ on $F_0$.
We start with the action of the generators.
Let $s \in V(L)$.
For $h \in T_0$ we set
\[
b_h * s = \left\{
\begin{array}{ll}
b_{u(hs)} & \text{if } hs \in A_{L_1} T_0\,,\\
b_{x^{k_x-1}} \cdots b_{x} \, b_1 \, b_x^{-1} \cdots b_{x^{k_x-1}}^{-1} &\text{if } s=x \in V(L \setminus L_1) \text{ and } h=x^{k_x-1}\,.
\end{array} \right.
\]
Then we extend the map $B \to F_0$, $b_h \mapsto b_h*s$, to a homomorphism $F_0 \to F_0$, $f \mapsto f *s$.

\begin{lem}\label{Lem3_7}
The above defined homomorphism $*s : F_0 \to F_0$ is an automorphism.
\end{lem}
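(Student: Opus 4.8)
The plan is to show that $*s$ is bijective by exhibiting an explicit inverse, which will automatically be a homomorphism (since it is forced to be the inverse of a homomorphism, or alternatively I can define it directly as a homomorphism and check the two compositions are the identity). The natural candidate for the inverse is the automorphism $*s^{-1}$ associated to $s^{-1}$, defined by the same recipe applied to $s^{-1}$ in place of $s$. So the real content is to verify that $*s$ and $*s^{-1}$ are mutually inverse homomorphisms of $F_0$. Since both are determined by their values on the free basis $B_0 = \{b_h \mid h \in T_0\}$, it suffices to check that $(b_h * s) * s^{-1} = b_h$ and $(b_h * s^{-1}) * s = b_h$ for every $h \in T_0$. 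Here I must be careful: $(b_h*s)*s^{-1}$ means applying the homomorphism $*s^{-1}$ to the element $b_h*s \in F_0$, and $*s^{-1}$ acts on products by acting on each factor.

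\textbf{First}, I would organize the verification according to the case division of Lemma \ref{Lem3_6}, since the definition of $b_h * s$ branches precisely along those cases. The generic cases (4) are the easiest: if $hs, hs^{-1} \in T_0$ then $u(hs) = hs$ and $u(hs^{-1}) = hs^{-1}$, and I need $(hs)s^{-1} = h$ and $(hs^{-1})s = h$ at the level of the $u$-operation, i.e. $u(u(hs)\,s^{-1}) = h$; this is a direct check using the description of $u$ recorded just before the lemma. \textbf{Next}, the interesting interaction happens in case (1), where $s \in V(L_1)$ commutes with every vertex in $\supp(h)$: here $u(hs) = s^{-1}hs$ and one must verify that applying $*s^{-1}$ to $b_{s^{-1}hs}$ returns $b_h$. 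The point is that $s^{-1}hs$ still has support inside the relevant $S_i$'s (conjugation by $s$ does not change which $x_i$ appear in the normal form, as the proof of Lemma \ref{Lem3_6} shows), so the case-(1) branch applies again and gives $u((s^{-1}hs)s^{-1}) = s(s^{-1}hs)s^{-1} = h$.

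\textbf{The main obstacle}, and the only place where the nontrivial second branch of the definition intervenes, is the case $s = x \in V(L\setminus L_1)$ together with the two boundary values $h = x^{k_x - 1}$ and $h = 1$ (cases (2) and (3) of Lemma \ref{Lem3_6}). When $h = x^{k_x-1}$, the element $b_h * x$ is \emph{not} a single basis element but the long conjugate word $b_{x^{k_x-1}} \cdots b_x\, b_1\, b_x^{-1} \cdots b_{x^{k_x-1}}^{-1}$, reflecting relation \eqref{Eq2_1}. To check that applying $*x^{-1}$ to this word recovers $b_{x^{k_x-1}}$, I would compute $b_{x^j} * x^{-1}$ for each $j \in \{0,1,\dots,k_x-1\}$ using cases (3) and (4): for $0 < j \le k_x-1$ one has $x^j x^{-1} = x^{j-1} \in T_0$, so $b_{x^j} * x^{-1} = b_{x^{j-1}}$, while $b_1 * x^{-1}$ lands in the exceptional branch and expands into the corresponding long conjugate word built from $b_{x^{k_x-1}}, \dots, b_x, b_1$. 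Substituting these into the conjugate expression and collapsing the resulting telescoping product of conjugates is the one genuinely computational step; it is the free-group shadow of the two identities in \eqref{Eq2_1} and must come out exactly to $b_{x^{k_x-1}}$.

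\textbf{Finally}, once both compositions are verified on the basis $B_0$, they agree with the identity homomorphism on a free generating set, hence are the identity on all of $F_0$; this shows $*s$ is invertible with inverse $*s^{-1}$, and therefore is an automorphism, completing the proof. I would present the verification as a case-by-case check keyed to Lemma \ref{Lem3_6}, isolating the telescoping computation for $h = x^{k_x-1}$ as the crux and treating all other cases as routine applications of the formula for $u$.
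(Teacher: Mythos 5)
Your proposal is correct and takes essentially the same route as the paper's proof of Lemma \ref{Lem3_7}: you define the candidate inverse $*s^{-1}$ by the analogous recipe, verify $u(u(hs)s^{-1}) = h = u(u(hs^{-1})s)$ in the generic cases keyed to Lemma \ref{Lem3_6} (including the observation that the commuting condition on $\supp(h)$ is preserved under passing to $u(hs)$), and isolate the telescoping collapse of the long conjugate word in the exceptional cases $h = x^{k_x-1}$ and $h = 1$, which is exactly the crux computation the paper carries out.
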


\begin{proof}
The result will follow if we show that the map has an inverse. 
Our candidate to inverse will be the map $*s^{-1}: F_0 \to F_0$ defined by
\[
b_h * s^{-1} = \left\{
\begin{array}{ll}
b_{u(hs^{-1})} & \text{if } hs^{-1} \in A_{L_1} T_0\,,\\
b_1^{-1}\, b_x^{-1} \cdots b_{x^{k_x-2}}^{-1}\, b_{x^{k_x-1}}\, b_{x^{k_x-2}} \cdots b_{x}\, b_1 &\text{if } s=x \in V(L \setminus L_1)\\
& \text{ and } h=1\,.
\end{array} \right.
\]

Assume first that either $s \in V(L_1)$, or $h \not\in \{ 1,x^{k_x-1}\}$ with $s=x \in V(L \setminus L_1)$. 
In this case we only have to check that $u(u(hs)s^{-1})=h=u(u(hs^{-1})s)$.
Observe that the condition $s \in V(L_1)$ and $s \in S_i$ for every $x_i\in\supp (h)$ is equivalent to $s\in V(L_1)$ and  $s \in S_i$ for every $x_i\in\supp(u(hs))$, thus, if that condition holds, we have $u(hs)=s^{-1}hs$ and $u(u(hs)s^{-1}) = u((s^{-1}hs)s^{-1}) = ss^{-1}hss^{-1} = h$.
If the condition fails, then $u(hs)=hs$ and $u(u(hs)s^{-1})=u((hs)s^{-1})=hss^{-1}=h$.
Analogously, one checks that $h=u(u(hs^{-1})s)$.

Now, we assume that $ s=x\in V(L\setminus L_1)$ and either $h=1$ or $h=x^{k_x-1}$. 
If $h=x^{k_x-1}$, then 
\begin{gather*}
(b_h*x)*x^{-1} =
(b_{x^{k_x-1}} \cdots b_{x}\, b_1\, b_x^{-1} \cdots b_{x^{k_x-1}}^{-1})*{x^{-1}}\\ =
b_{x^{k_x-2}} \cdots b_1\, (b_1*{x^{-1}})\, b_1^{-1} \cdots b_{x^{k_x-2}}^{-1} \\=
b_{x^{k_x-2}} \cdots b_1\, b_1^{-1}\, b_x^{-1} \cdots b_{x^{k_x-2}}^{-1}\, b_{x^{k_x-1}}\, b_{x^{k_x-2}} \cdots b_{x}\, b_1\, b_1^{-1} \cdots b_{x^{k_x-2}}^{-1}
= b_{x^{k_x-1}} = b_h\,.
\end{gather*}
On the other hand,
\[
(b_h*{x^{-1}})*x = b_{x^{k_x-2}}*x = b_{x^{k_x-1}} = b_h\,.
\]
The case when $h=1$ is analogous.
\end{proof}

To show that this action, defined just for the generators of $A_L$, yields an action of $A_L$ on $F_0$, we need to check that it preserves the Artin relations. 
We do it in the next two lemmas.

\begin{lem}\label{Lem3_8}
Let $s,t \in V(L)$ such that $m_{s,t}=2$.
Then $(g*s)*t = (g*t)*s$ for all $g \in F_0$.
\end{lem}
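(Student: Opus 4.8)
The plan is to reduce to the basis of $F_0$ and then run a finite case analysis driven entirely by Lemma~\ref{Lem3_6}. Since $*s$ and $*t$ are homomorphisms (indeed automorphisms, by Lemma~\ref{Lem3_7}) and $F_0$ is freely generated by $\{b_h \mid h \in T_0\}$, the maps $(*t)\circ(*s)$ and $(*s)\circ(*t)$ are homomorphisms that agree iff they agree on generators. So it suffices to prove $(b_h*s)*t = (b_h*t)*s$ for every $h \in T_0$. First I would record the two structural facts forced by $m_{s,t}=2$: the vertices $s,t$ commute in $A_L$, so $st=ts$; and, since $L\setminus L_1$ is totally disconnected (a consequence of Lemma~\ref{Lem3_3}), the linked pair $\{s,t\}$ cannot lie entirely in $V(L\setminus L_1)$. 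Hence either \textbf{(A)} $s,t \in V(L_1)$, or \textbf{(B)} exactly one of them, say $t=x\in V(L\setminus L_1)$, with $s\in V(L_1)$; in case (B), writing $x=x_i$, the label-$2$ edge gives $s\in V(S_i)$, so $s$ and $x$ commute.

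In case (A) I would use the explicit description of $u$ extracted just after Lemma~\ref{Lem3_6}: for $r\in V(L_1)$ one has $u(hr)=r^{-1}hr$ when $r\in V(S_i)$ for every $x_i\in\supp(h)$, and $u(hr)=hr$ otherwise, and in both cases $\supp(u(hr))=\supp(h)$. This support invariance is the key bookkeeping point: it guarantees that the condition governing $*t$ takes the same value on $h$ and on $u(hs)$, and symmetrically. Splitting into four subcases according to whether $s$ (resp. $t$) satisfies the conjugation condition relative to $h$, each of $(b_h*s)*t$ and $(b_h*t)*s$ collapses to a single basis element $b_w$, and in every subcase the two resulting words $w$ differ only by replacing an occurrence of $st$ by $ts$; hence they coincide.

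In case (B) I would split on whether $h=x^{k_x-1}$. When $h\neq x^{k_x-1}$ one has $u(hx)=hx$, and since $s$ commutes with $x$ the condition governing $*s$ is insensitive to the presence or absence of $x$ in the support; thus both sides reduce to the same $b_w$ (with $w\in\{hsx,\,s^{-1}hsx\}$) after commuting the factor $s$ past $x$, \emph{provided} the special value $x^{k_x-1}$ never reappears. Checking that it does not is the delicate point: I would rule out $hs=x^{k_x-1}$ by observing that it forces $h=s^{-1}x^{k_x-1}$, a word whose normal form begins with $s^{-1}$ and therefore lies outside $T_0$, contradicting $h\in T_0$; the same leading-letter argument disposes of $s^{-1}hs=x^{k_x-1}$. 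When $h=x^{k_x-1}$ the non-monomial defining formula for $b_h*x$ enters; here I would show that $*s$ fixes each generator $b_{x^j}$ in that formula, since $s$ commutes with $x$ gives $u(x^j s)=s^{-1}x^j s=x^j$, so $(b_h*x)*s=b_h*x$, while at the same time $b_h*s=b_{x^{k_x-1}}=b_h$, whence $(b_h*s)*x=b_h*x$ as well.

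The main obstacle is case (B) with $h=x^{k_x-1}$: this is the only situation in which the auxiliary (non-monomial) formula for the action intervenes, and one must verify both that $*s$ leaves that entire word invariant and that the distinguished value $x^{k_x-1}$ cannot be created or destroyed upon multiplication by $s$. The $T_0$-membership argument based on the leading letter of the normal form is precisely what prevents the special case from leaking into the generic computation, and everything else reduces, via Lemma~\ref{Lem3_6} and support invariance, to the single relation $st=ts$.
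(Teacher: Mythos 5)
Your proposal is correct and follows essentially the same route as the paper's own proof: reduce to the free generators of $F_0$, note via Lemma \ref{Lem3_1} that at least one of $s,t$ lies in $V(L_1)$, run the four subcases dictated by Lemma \ref{Lem3_6} (where both sides collapse to a single $b_w$ differing only by $st=ts$, thanks to the support invariance the paper also records), and dispose of the exceptional case $h=x^{k_x-1}$ by checking that $*s$ fixes both $b_h$ and the long word $b_{x^{k_x-1}}\cdots b_x\,b_1\,b_x^{-1}\cdots b_{x^{k_x-1}}^{-1}$. Your only addition is the explicit verification that the special value $x^{k_x-1}$ cannot reappear as $hs$ or $s^{-1}hs$, a point the paper leaves implicit.
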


\begin{proof}
Since $s$ and $t$ are linked in $V(L)$, Lemma \ref{Lem3_1} implies that at least one of the vertices $s,t$ lies in $V(L_1)$. 
Without loss of generality we may assume $s\in V(L_1)$, i.e., $k_s=1$. 
Let $h \in T_0$.
Then $b_h*s=b_{u(hs)}$.

Assume first that either $ht \in T_0$, or $t\in S_i$ for every $x_i\in\supp(h)$. 
In this last case, we have $t \in S_i$ for any $x_i \in \supp(u(hs))$. 
Therefore
\[
(b_h*s)*t=b_{u(hs)}*t=b_{u(u(hs)t)} \quad \text{and}\quad (b_h*t)*s=b_{u(ht)}*s = b_{u(u(ht)s)}\,.
\]
Depending on whether $hs$ lies in $T_0$ or not we have $u(hs)=hs$ or $u(hs)=s^{-1}hs$, and the same for $t$. 
So, we have four cases to consider.
If $hs,ht \in T_0$, then $hst=hts \in T_0$ and
\[
u(u(hs)t) = u((hs)t)=hst=hts=u((ht)s)=u(u(ht)s)\,.
\]
If $hs \in T_0$ and $ht \not \in T_0$, then $hst\not\in T_0$ but $t^{-1}hts\in T_0$, hence
\[
u(u(hs)t) =u((hs)t)= t^{-1}hst = t^{-1}hts = u((t^{-1}ht)s)=u(u(ht)s)\,.
\]
Similarly, if $hs \not\in T_0$ and $ht \in T_0$, then $u(u(hs)t) = u(u(ht)s)$. 
If $hs,ht\not\in T_0$, then $s^{-1}hst\not\in T_0$ and $t^{-1}hts \not\in T_0$, thus 
\[
u(u(hs)t) = u((s^{-1}hs)t) = t^{-1}s^{-1}hst = s^{-1}t^{-1}hts = u((t^{-1}ht)s) = u(u(ht)s)\,.
\]

We are left with the case where $t=y \in V(L\setminus L_1)$ and $h=y^{k_y-1}$. 
Then, since $s$ and $y$ are linked, for every $\alpha\in\{0,1,\dots ,k_y-1\}$ we have $u(y^\alpha s) = s^{-1}y^\alpha s = y^\alpha$, thus
\[
(b_h*s)*y = b_h*y = b_{y^{k_y-1}} \cdots b_y\, b_1\, b_y^{-1} \cdots b_{y^{k_y-1}}^{-1}\,,
\] \[
(b_h*y)*s=(b_{y^{k_y-1}} \cdots b_y\,b_1\,b_y^{-1} \cdots b_{y^{k_y-1}}^{-1})*s = b_{y^{k_y-1}} \cdots b_y\, b_1\, b_y^{-1} \cdots b_{y^{k_y-1}}^{-1}\,.
\proved
\]
\end{proof}

If $w$ is a word over $\{s,t\}$, where $s,t \in V(L)$, and if $h \in T_0$, we define $b_h * w$ by induction on the length of $w$ by setting $b_h*1=b_h$ and $b_h*(ws) = (b_h*w)*s$.

\begin{lem}\label{Lem3_9}
Let $s,t \in V(L)$ such that $m_{s,t} = 2k >2$.
Then $b_h*((st)^k)=b_h*((ts)^k)$ for all $h \in T_0$.
\end{lem}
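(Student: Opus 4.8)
The plan is to reduce the identity to the genuine even Artin relation inside $A_{L_1}$ and to exploit the fact that, under the present hypotheses, both $s$ and $t$ act on $F_0$ purely by relabelling basis elements. First I would invoke Lemma \ref{Lem3_3}: since $s$ and $t$ are linked in $L$ and $m_{s,t}=2k>2$, part (2) forces $k_s=k_t=1$, so that both $s$ and $t$ lie in $V(L_1)$. Consequently neither of them can ever trigger the exceptional clause in the definition of $*s$, which only concerns vertices of $V(L\setminus L_1)$; indeed, by Lemma \ref{Lem3_6} (only cases (1) and (4) are relevant when $s\in V(L_1)$) we have $hs,ht\in A_{L_1}T_0$ for every $h\in T_0$, so that $b_h*s=b_{u(hs)}$ and $b_h*t=b_{u(ht)}$ hold without exception.

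The key structural fact I would record next is the ``associativity'' of $u$: for $h\in T_0$ and $g_1,g_2\in A_{L_1}$ one has $u(u(hg_1)g_2)=u(hg_1g_2)$. This follows from the remark made right after Lemma \ref{Lem3_6} (namely $g_1hg_2\in A_{L_1}T_0$ whenever $g_1,g_2\in A_{L_1}$) together with the uniqueness, guaranteed by Lemma \ref{Lem3_5}, of the factorization $gh$ with $g\in A_{L_1}$ and $h\in T_0$: writing $hg_1=p_1h_1$ and $h_1g_2=p_2h_2$ with $p_i\in A_{L_1}$ and $h_i\in T_0$ gives $hg_1g_2=p_1p_2h_2$ with $p_1p_2\in A_{L_1}$, whence $u(hg_1g_2)=h_2=u(u(hg_1)g_2)$.

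With these two ingredients I would prove, by induction on the length of a word $w$ over $\{s,t\}$, that $b_h*w=b_{u(h\bar w)}$, where $\bar w$ denotes the element of $A_{L_1}$ represented by $w$. The base case $w=1$ is the definition $b_h*1=b_h=b_{u(h)}$, while the inductive step reads
\[
b_h*(ws)=(b_h*w)*s=b_{u(h\bar w)}*s=b_{u(u(h\bar w)s)}=b_{u(h\,\overline{ws})}\,,
\]
using the relabelling formula from the first paragraph and the associativity from the second (and symmetrically for $t$). Finally, since $s$ and $t$ are joined in $L_1$ by an edge labelled $2k$, the even Artin relation of $A_{L_1}$ gives $\overline{(st)^k}=\overline{(ts)^k}$ in $A_{L_1}$, and applying the formula just proved to both words yields
\[
b_h*((st)^k)=b_{u(h\,\overline{(st)^k})}=b_{u(h\,\overline{(ts)^k})}=b_h*((ts)^k)\,,
\]
which is the assertion. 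The only delicate point is the bookkeeping behind the associativity of $u$, i.e.\ ensuring that every intermediate product remains in $A_{L_1}T_0$ so that $u$ is defined at each stage; but this is exactly what the remark following Lemma \ref{Lem3_6} supplies, so I do not expect a genuine obstacle. In effect the hypothesis $m_{s,t}>2$ has already done the real work, by pushing both vertices into $L_1$ and thereby eliminating the exceptional clause of the action that complicated the analysis of Lemma \ref{Lem3_8}.
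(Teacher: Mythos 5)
Your proof is correct, and it rests on the same two pillars as the paper's own argument: first, that $m_{s,t}=2k>2$ forces $s,t\in V(L_1)$ (the paper quotes Lemma \ref{Lem3_1} directly, you quote Lemma \ref{Lem3_3}, which is equivalent here), and second, that once both vertices lie in $V(L_1)$ the identity reduces to the even Artin relation $(st)^k=(ts)^k$ holding in the group $A_{L_1}$. Where you genuinely diverge is in the bookkeeping. The paper mimics the first part of the proof of Lemma \ref{Lem3_8}: it splits into four cases according to whether $hs$ and $ht$ lie in $T_0$, observes that this membership is stable along the alternating word, and writes out the resulting basis element explicitly in each case ($b_{h(st)^k}$, $b_{t^{-k}h(st)^k}$, $b_{s^{-k}h(st)^k}$, $b_{(st)^{-k}h(st)^k}$), comparing each with its $(ts)^k$ counterpart. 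You instead prove the single closed formula $b_h*w=b_{u(h\bar w)}$ for every word $w$ over $\{s,t\}$, deduced from the multiplicativity $u(u(hg_1)g_2)=u(hg_1g_2)$, which you correctly extract from the unique factorization of elements of $A_{L_1}T_0$ recorded after Lemma \ref{Lem3_6}; your observation that only cases (1) and (4) of Lemma \ref{Lem3_6} can occur for a vertex of $V(L_1)$, so the exceptional clause of the action never fires and $u$ is defined at every intermediate stage, is exactly the needed guarantee. Your formula subsumes the paper's four cases at once (for instance $u(h(st)^k)$ specializes to $t^{-k}h(st)^k$ in the mixed case), and it is in fact the same mechanism the paper deploys only later, in the proof of Lemma \ref{Lem3_12}, where $u(u(\bar v)s^\varepsilon)=u(\bar v s^\varepsilon)$ is shown by the same factorization argument. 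What your route buys is uniformity and a reusable identity; what the paper's buys is explicit formulas that make the parallel with Lemma \ref{Lem3_8} transparent. Either way the conclusion $b_h*((st)^k)=b_{u(h\,\overline{(st)^k})}=b_{u(h\,\overline{(ts)^k})}=b_h*((ts)^k)$ is sound.
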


\begin{proof}
Note that, since $s$ and $t$ are linked and the edge between them is labelled with $2k>2$, Lemma \ref{Lem3_1} implies that 
$s,t\in V(L_1)$. 
Take $h\in T_0$.
Then, in a similar way as in the proof of Lemma \ref{Lem3_8}, we have $hs\in T_0$ if and only if $u(hstst \cdots t)s\in T_0$ and this is also equivalent to $u(htsts \cdots t)s\in T_0$. 
The same thing happens for $t$. 
So, we may distinguish essentially the same cases as in the first part of the proof of Lemma \ref{Lem3_8} and get the following.
If $hs,ht\in T_0$, then
\[ 
(b_h)*{(st)^k}=b_{h(st)^k} = b_{h(ts)^k} = (b_h)*{(ts)^k}\,.
\]
If $hs \in T_0$ and $ht \not\in T_0$, then
\[ 
(b_h)*{(st)^k} = b_{t^{-k}h(st)^k} = b_{t^{-k}h(ts)^k} = (b_h)*{(ts)^k}\,.
\]
Similarly, if $hs\not\in T_0$ and $ht\in T_0$, then $(b_h)*{(st)^k} = (b_h)*{(ts)^k}$. 
If $hs, ht\not\in T_0$, then  
\[
(b_h)*{(st)^k}=b_{(st)^{-k}h(st)^k} = b_{(ts)^{-k}h(ts)^k}=(b_h)*{(ts)^k}\,.
\proved
\]
\end{proof}

All the previous discussion implies the following.

\begin{lem}\label{Lem3_10}
The mappings $*s$, $s \in V(L)$, yield a well-defined right-action $F_0 \times A_L \to F_0$, $(u,g) \mapsto u*g$.
\end{lem}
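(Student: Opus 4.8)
The plan is to promote the generator-level automorphisms $*s$ to a genuine right action of the whole group $A_L$ on $F_0$. Since $A_L$ is presented by the generating set $V(L)$ together with the even Artin relations, and each $*s$ is already known to be an automorphism of $F_0$ by Lemma \ref{Lem3_7}, the standard way to produce a homomorphism $A_L \to \operatorname{Aut}(F_0)$ is to invoke the universal property of a group presentation: a map sending each generator $s$ to an automorphism extends to a homomorphism on $A_L$ precisely when the images satisfy every defining relation. Thus I would assemble the ingredients already proved into this single verification.

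First I would record that, by Lemma \ref{Lem3_7}, each map $*s$ for $s \in V(L)$ is an automorphism of $F_0$, so we have a well-defined set map $V(L) \to \operatorname{Aut}(F_0)$ sending $s$ to $*s$. Because $\operatorname{Aut}(F_0)$ is a group, by the universal property of the free group on $V(L)$ this extends uniquely to a homomorphism $\Phi$ from the free group on $V(L)$ into $\operatorname{Aut}(F_0)$. It then remains to check that $\Phi$ kills every defining relator of $A_L$, so that it descends to a homomorphism $A_L \to \operatorname{Aut}(F_0)$. The defining relations of the even Artin group $A_L$ are exactly the relations $(st)^{k_{s,t}} = (ts)^{k_{s,t}}$ for linked vertices $s,t \in V(L)$; these split into the commuting relations ($m_{s,t}=2$) and the genuinely Artin ones ($m_{s,t}=2k>2$). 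But these are precisely what Lemmas \ref{Lem3_8} and \ref{Lem3_9} establish: Lemma \ref{Lem3_8} gives $(g*s)*t=(g*t)*s$ for all $g\in F_0$ when $m_{s,t}=2$, and Lemma \ref{Lem3_9} gives $b_h*((st)^k)=b_h*((ts)^k)$ for all $h\in T_0$ when $m_{s,t}=2k>2$. Since the elements $b_h$, $h\in T_0$, form a free basis of $F_0$, an automorphism is determined by its effect on them, so Lemma \ref{Lem3_9} upgrades automatically from basis elements to all of $F_0$.

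Concretely, the argument would read: the images under $\Phi$ of the two sides of each Artin relation agree as automorphisms of $F_0$, because they agree on the free basis $B_0$. For the $m_{s,t}=2$ case this is Lemma \ref{Lem3_8} evaluated at each $g=b_h$ (indeed it holds for all $g\in F_0$), and for the $m_{s,t}=2k>2$ case this is Lemma \ref{Lem3_9}. Hence $\Phi(\Pi(s,t:m_{s,t})\,\Pi(t,s:m_{s,t})^{-1})=\operatorname{Id}$ for every relator, so $\Phi$ factors through $A_L$, yielding a homomorphism $A_L \to \operatorname{Aut}(F_0)$. Writing $u*g=\Phi(g)(u)$, and noting that a homomorphism into $\operatorname{Aut}(F_0)$ is exactly the data of a right action of $A_L$ on the group $F_0$, we obtain the desired well-defined right action $F_0 \times A_L \to F_0$, $(u,g)\mapsto u*g$.

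I do not expect a serious obstacle here, since this lemma is purely a bookkeeping step that bundles the three preceding lemmas through the universal property of a presentation; the genuine work was already done in Lemmas \ref{Lem3_7}, \ref{Lem3_8} and \ref{Lem3_9}. The only point requiring a moment's care is the remark that Lemma \ref{Lem3_9} is stated only on the basis elements $b_h$ with $h\in T_0$, whereas a relation among automorphisms must be checked on a generating set of $F_0$ — but $B_0=\{b_h\mid h\in T_0\}$ \emph{is} a free basis of $F_0$, so agreement on $B_0$ forces agreement as automorphisms, and no extension argument beyond this is needed. One should also confirm that the handedness is consistent, i.e. that reading relations left-to-right with the convention $b_h*(ws)=(b_h*w)*s$ indeed corresponds to a \emph{right} action; this matches the conjugation intuition $u*g$ mimicking conjugation by $g$ in $\operatorname{Ker}(\pi_1)$, which is the motivation recorded before Lemma \ref{Lem3_6}.
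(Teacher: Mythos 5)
Your proof is correct and matches the paper's (implicit) argument: the paper states Lemma \ref{Lem3_10} as an immediate consequence of Lemmas \ref{Lem3_7}, \ref{Lem3_8} and \ref{Lem3_9} via exactly the universal-property verification you spell out, including the observation that checking the Artin relations on the free basis $B_0$ suffices. Your extra care about $B_0$ being a basis and about the right-action convention (a homomorphism into $\operatorname{Aut}(F_0)$ with the opposite composition, consistent with $b_h*(ws)=(b_h*w)*s$) only makes explicit what the paper leaves unsaid.
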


Moreover, this action behaves as one might expect. 
To show this, we will need the following technical lemma.

\begin{lem}\label{Lem3_11}
Let $g\in A_{L_1}T_0$, and let $n(g) = w_0 x_{\alpha_1}^{\varepsilon_1} w_1 \cdots x_{\alpha_n}^{\varepsilon_n} w_n$ be its normal form. 
Then any prefix of $n(g)$ also represents an element in $A_{L_1}T_0$.
\end{lem}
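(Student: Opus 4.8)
The plan is to reduce the claim to two ingredients: a description of $A_{L_1}T_0$ purely in terms of normal forms, and the absorption property $A_{L_1}T_0\,A_{L_1}\subseteq A_{L_1}T_0$, recorded just before this statement as a consequence of Lemma \ref{Lem3_6}. For the first ingredient I would analyse how left multiplication by $A_{L_1}$ affects a normal form. If $h\in T_0\setminus\{1\}$, then $n(h)$ has $w_0=1$, first exponent $\varepsilon_1=1$, and an initial run of $x_{\alpha_1}$'s of length $<k_{x_{\alpha_1}}$; and for $g'\in A_{L_1}$ the product $g'h$ has normal form obtained from $n(h)$ by replacing the initial block $1$ with $n(g')$, the tail being left untouched, since in each layer $A_{X_i}=A_{X_{i-1}}*_{\varphi_i}$ a left factor from the base is absorbed into $w_0$. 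This gives the characterization: for $u\in A_L$ one has $u\in A_{L_1}T_0$ if and only if either $n(u)\in\NN_1$, or $n(u)=w_0x_{\alpha_1}^{\varepsilon_1}w_1\cdots$ with $\varepsilon_1=1$ and initial $x_{\alpha_1}$-run shorter than $k_{x_{\alpha_1}}$; in particular, membership is a condition on the beginning of $n(u)$ only.

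Now write $n(g)=w_0x_{\alpha_1}^{\varepsilon_1}w_1\cdots x_{\alpha_N}^{\varepsilon_N}w_N$ and let $p$ be a prefix. If $p$ ends inside $w_0$, then $\bar p\in A_{L_1}\subseteq A_{L_1}T_0$. Otherwise $p$ reaches the first stable letter, and I would write $p=q_jw_j'$, where $q_j=w_0x_{\alpha_1}^{\varepsilon_1}w_1\cdots x_{\alpha_j}^{\varepsilon_j}$ for some $j\in\{1,\dots,N\}$ and $w_j'$ is a prefix of $w_j$. The word $q_j$ satisfies all the defining conditions of $\NN_L$, inherited from $n(g)$ with the new trailing block being the admissible $1$, so it is the normal form of $\bar q_j$. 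Since $g\in A_{L_1}T_0$ and $j\ge 1$ force $\varepsilon_1=1$ and the initial $x_{\alpha_1}$-run of $n(g)$ to be shorter than $k_{x_{\alpha_1}}$, and since truncation only shortens that run, $q_j$ satisfies the same beginning condition; hence $\bar q_j\in A_{L_1}T_0$ by the characterization. Finally $\bar p=\bar q_j\,\overline{w_j'}$ with $\overline{w_j'}\in A_{L_1}$, so $\bar p\in A_{L_1}T_0\,A_{L_1}\subseteq A_{L_1}T_0$ by absorption.

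The only delicate point is the characterization. One must verify that left multiplication by $A_{L_1}$ truly leaves the tail of the normal form intact, so that belonging to $A_{L_1}T_0$ is controlled solely by $w_0$ and the first $x$-run. One must also resist a naive induction on prefix length: appending a single letter to a $T_0$-element need not keep it in $A_{L_1}T_0$, as $x_i^{k_{x_i}-1}x_i=x_i^{k_{x_i}}\notin A_{L_1}T_0$ shows, so the argument must exploit the hypothesis $g\in A_{L_1}T_0$, which bounds the initial $x_{\alpha_1}$-run below $k_{x_{\alpha_1}}$ for all prefixes simultaneously. A later run of some $x_i$ in $n(g)$ may perfectly well have length $\ge k_{x_i}$, as $\NN_L$ permits, but this is harmless, precisely because $T_0$ constrains only the beginning of the word while the absorption property settles every prefix ending inside a block.
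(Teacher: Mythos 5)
Your proof is correct and takes essentially the same route as the paper's: you decompose the prefix as a truncation ending at a stable letter times a prefix of $w_j$, observe that the truncated word is still in $\NN_L$ and represents an element of $A_{L_1}T_0$, and absorb the remaining $A_{L_1}$-factor using the fact $A_{L_1}T_0\,A_{L_1}\subseteq A_{L_1}T_0$ recorded after Lemma \ref{Lem3_6}. The only difference is expository: your normal-form characterization of $A_{L_1}T_0$ makes explicit the step the paper dismisses with ``it is clear that $h'\in A_{L_1}T_0$''.
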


\begin{proof}
The only case where it is not obvious is when the prefix is of the form $w_0 x_{\alpha_1}^{\varepsilon_1} w_1 \cdots x_{\alpha_j}^{\varepsilon_j} u_j$ with $u_j$ a prefix of $w_j$.
Let $h$ be the element of $A_L$ represented by $w_0 x_{\alpha_1}^{\varepsilon_1} w_1 \cdots x_{\alpha_j}^{\varepsilon_j} u_j$, and let $h'$ be the element represented by $w_0 x_{\alpha_1}^{\varepsilon_1} w_1 \cdots x_{\alpha_j}^{\varepsilon_j}$.
It is clear that $h' \in A_{L_1} T_0$.
Moreover, $h=h' \bar u_j$, hence, as pointed out after the proof of Lemma \ref{Lem3_6}, we have $h \in A_{L_1} T_0$.
\end{proof}

\begin{lem}\label{Lem3_12}
For every $g\in A_{L_1}T_0$ we have $b_1 *g=b_{u(g)}$.
\end{lem}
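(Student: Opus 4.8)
The plan is to prove the identity by induction, reading the normal form $n(g)$ from left to right and evaluating $b_1*g$ one letter at a time; the well-definedness of the action (Lemma \ref{Lem3_10}) guarantees that the answer is independent of the chosen word, and Lemma \ref{Lem3_11} guarantees that every intermediate element stays inside $A_{L_1}T_0$, so that $u$ is defined at every stage. Concretely, I would write $n(g)=\ell_1\ell_2\cdots\ell_N$ as a word in $V(L)\cup V(L)^{-1}$ (expanding each syllable $w_j$ and writing each $x_{\alpha_j}^{\varepsilon_j}$ as a single letter), and for $0\le p\le N$ let $g_p$ be the element represented by the prefix $\ell_1\cdots\ell_p$, so that $g_0=1$, $g_N=g$ and $g_{p+1}=g_p\ell_{p+1}$. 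By Lemma \ref{Lem3_11} each $g_p$ lies in $A_{L_1}T_0$, hence $u(g_p)$ is defined.

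The inductive claim is $b_1*g_p=b_{u(g_p)}$. The case $p=0$ is immediate since $u(1)=1$. For the inductive step, set $h=u(g_p)\in T_0$ and $\ell=\ell_{p+1}$; the right-action property gives $b_1*g_{p+1}=(b_1*g_p)*\ell=b_h*\ell$, using the induction hypothesis. Thus it suffices to show $b_h*\ell=b_{u(g_{p+1})}$, a single basis element.

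The heart of the argument is to exclude the two exceptional cases in the definitions of $*s$ and $*s^{-1}$, namely (A) $\ell=x$ with $h=x^{k_x-1}$, and (B) $\ell=x^{-1}$ with $h=1$, for some $x\in V(L\setminus L_1)$; in these cases the action produces a long conjugating word rather than a single generator. I would rule both out using Lemma \ref{Lem3_11}. If (A) held, then $h=x^{k_x-1}$ forces $g_p=a\,x^{k_x-1}$ with $a\in A_{L_1}$ and $n(g_p)=n(a)\,x^{k_x-1}$, whence $g_{p+1}=g_p\,x=\overline{\,n(a)\,x^{k_x}\,}$; stripping the $A_{L_1}$-prefix $n(a)$ from this normal form leaves $x^{k_x}$, which is \emph{not} in $T_0$ since it begins with a run of $k_x$ letters $x$, so $g_{p+1}\notin A_{L_1}T_0$, contradicting Lemma \ref{Lem3_11}. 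Case (B) is analogous: there $g_p=a\in A_{L_1}$, so $g_{p+1}=\overline{\,n(a)\,x^{-1}\,}$, and stripping $n(a)$ leaves $x^{-1}$, which is not in $T_0$ because its first syllable has exponent $-1$, again contradicting Lemma \ref{Lem3_11}.

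With the exceptional cases excluded, Lemma \ref{Lem3_6} (together with the remark following its proof) gives $h\ell\in A_{L_1}T_0$ and $b_h*\ell=b_{u(h\ell)}$. Finally, writing $g_p=a\,h$ with $a\in A_{L_1}$ and $h\ell=a'h'$ with $a'\in A_{L_1}$ and $h'=u(h\ell)\in T_0$, we get $g_{p+1}=aa'h'$, so uniqueness of the $A_{L_1}T_0$-decomposition yields $u(g_{p+1})=h'=u(h\ell)$ and hence $b_h*\ell=b_{u(g_{p+1})}$; taking $p=N$ gives $b_1*g=b_{u(g)}$. The only genuine obstacle is the exclusion of cases (A) and (B): this is exactly where Lemma \ref{Lem3_11} is indispensable, since without knowing that \emph{every} prefix of $n(g)$ represents an element of $A_{L_1}T_0$ one could not prevent the generator action from producing the long words appearing in the definition of $*s$ and in the proof of Lemma \ref{Lem3_7}.
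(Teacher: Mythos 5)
Your proof is correct and takes essentially the same route as the paper's: induction along the letters of $n(g)$, with Lemma \ref{Lem3_11} keeping every prefix inside $A_{L_1}T_0$, and the uniqueness of the decomposition $g=qh$ ($q\in A_{L_1}$, $h\in T_0$) giving $u(u(g_p)\ell)=u(g_p\ell)$ at each step. Your explicit exclusion of the exceptional cases (A) and (B) is precisely the content of the paper's brief observation that $u(u(\bar v)s^{\varepsilon})$ is well-defined, which there follows in one line from $u(\bar v)s^{\varepsilon}=q^{-1}(\bar v s^{\varepsilon})\in A_{L_1}T_0$ rather than by your (equally valid) normal-form contradiction.
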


\begin{proof}
Set again $n(g) = w_0 x_{\alpha_1}^{\varepsilon_1} w_1 \cdots x_{\alpha_n}^{\varepsilon_n} w_n$. 
Let $vs^\varepsilon$ be a prefix of $n(g)$ with $s$ a vertex and $\varepsilon \in \{ \pm 1 \}$.
Note that Lemma \ref{Lem3_11} implies that $vs^\varepsilon$ and $v$ represent elements in $A_{L_1}T_0$. 
We are going to prove that, if $b_1*\bar v = b_{u(\bar v)}$, then also $b_1 * {\bar vs^\varepsilon}= b_{u( \bar vs^\varepsilon)}$.
Note that this will imply the result. 
As $\bar vs^\varepsilon$ lies in $A_{L_1}T_0$, the element $u(\bar vs^\varepsilon)$ is well-defined.
Observe that $u(u(\bar v)s^\varepsilon)$ is also well-defined. 
We have $b_1 * (\bar vs^\varepsilon) = b_{u(\bar v)}*s^\varepsilon = b_{u(u(\bar v)s^\varepsilon)}$.
So, we only need to show that $u (u (\bar v) s^\varepsilon) = u(\bar v s^\varepsilon)$. 
Set $\bar v = q h$ with $h \in T_0$ and $q \in A_{L_1}$. 
Then $u(\bar v) = h$, thus, by Lemma \ref{Lem3_6}, $u (u (\bar v)s^\varepsilon) = u (hs^\varepsilon) = u(q h s^\varepsilon) = u( \bar v s^\varepsilon)$.
\end{proof}

Our next objective is to extend the action  of $A_L$ on $F_0$ to an action of $A_1$ on $F$. 
Recall that $T=T_0\Ker(\pi_L)$ and that any $h\in T$ can be written in a unique way as $h=h_0 u$ with $h_0\in T_0$ and $u \in \Ker(\pi_L)$. 
Taking this into account we set $b_h = b_{h_0u} = b_{h_0} \cdot u$.
We extend this notation to any element $\omega = \prod b_{h_i}^{\varepsilon_i} \in F_0$ by setting $\omega \cdot u=\prod b_{h_iu}^{\varepsilon_i}$.

Now, let $g\in A_1$ and $h\in T$. 
We write $h=h_0 u$ with $h_0 \in T_0$ and $u\in\Ker(\pi_L)$. 
So, with the previous notation, we have $b_h=b_{h_0} \cdot u$. 
Then we set
\[
b_h*g = (b_{h_0}*{\pi_L(g)}) \cdot (\pi_L(g)^{-1}ug)\,.
\]
We can also write this action as follows. 
Let $\omega = \prod b_{h_i}^{\varepsilon_i} \in F_0$ and let $u\in\Ker(\pi_L)$. 
Then 
\begin{equation}\label{Eq3_2}
(\omega \cdot u)*g=(\omega * \pi_L(g)) \cdot (\pi_L(g)^{-1}ug)\,.
\end{equation}

\begin{lem}\label{Lem3_13}
The above defined map $F \times A_1 \to F$, $(\omega, g) \mapsto \omega * g$, is a well-defined right-action of $A_1$ on $F$.
\end{lem}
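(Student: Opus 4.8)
The plan is to verify the three ingredients that make up the claim: that each map $*g \colon F \to F$ is well defined, that the identity of $A_1$ acts trivially, and that the composition rule $(\omega * g_1)*g_2 = \omega*(g_1 g_2)$ holds. Throughout I will use that $\pi_L \colon A_1 \to A_L$ is a homomorphism restricting to the identity on $A_L$, that Lemma \ref{Lem3_10} supplies a genuine right action of $A_L$ on $F_0$, and the following elementary observation: for a fixed $v \in \Ker(\pi_L)$ the assignment $b_{h_0} \mapsto b_{h_0 v}$ (with $h_0 \in T_0$) sends the basis $B_0$ injectively into $B$, since $h_0 \mapsto h_0 v$ is injective from $T_0$ into $T = T_0 \Ker(\pi_L)$; hence it extends to an injective homomorphism $F_0 \to F$, $\omega \mapsto \omega \cdot v$, and this is exactly the notation $\omega \cdot v = \prod b_{h_i v}^{\varepsilon_i}$.

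For well-definedness I would first note that, for $g \in A_1$, we have $\pi_L(g) \in A_L$, so $b_{h_0} * \pi_L(g) \in F_0$ is defined by Lemma \ref{Lem3_10}, and $\pi_L\bigl(\pi_L(g)^{-1} u g\bigr) = \pi_L(g)^{-1}\pi_L(u)\pi_L(g) = 1$ because $u \in \Ker(\pi_L)$, so $\pi_L(g)^{-1} u g \in \Ker(\pi_L)$. Thus the right-hand side of the defining formula lies in $F$; since each $h \in T$ factors uniquely as $h = h_0 u$ with $h_0 \in T_0$ and $u \in \Ker(\pi_L)$, the formula determines $b_h * g$ unambiguously on every generator of the free group $F$, and extending multiplicatively gives a well-defined homomorphism $*g$. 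Next I would check that Equation \ref{Eq3_2} holds for every $\omega = \prod b_{h_i}^{\varepsilon_i} \in F_0$ and every $u \in \Ker(\pi_L)$, not only on single generators: writing $\omega \cdot u = \prod b_{h_i u}^{\varepsilon_i}$ and applying the homomorphism $*g$ gives $\prod\bigl[(b_{h_i} * \pi_L(g)) \cdot v\bigr]^{\varepsilon_i}$ with $v = \pi_L(g)^{-1} u g$ the same for all $i$, and since $\cdot v$ is a homomorphism this equals $\bigl(\prod (b_{h_i}*\pi_L(g))^{\varepsilon_i}\bigr)\cdot v = (\omega * \pi_L(g))\cdot(\pi_L(g)^{-1} u g)$, which is Equation \ref{Eq3_2}.

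With these preliminaries in hand, both action axioms reduce, by the homomorphism property of the maps $*g$, to a computation on one generator $b_h = b_{h_0}\cdot u$. For the identity, $\pi_L(1)=1$ and Lemma \ref{Lem3_10} give $b_h * 1 = (b_{h_0}*1)\cdot(u) = b_{h_0}\cdot u = b_h$. For composition I would compute $b_h * g_1 = \omega_1 \cdot u_1$ with $\omega_1 = b_{h_0}*\pi_L(g_1) \in F_0$ and $u_1 = \pi_L(g_1)^{-1} u g_1 \in \Ker(\pi_L)$, and then apply Equation \ref{Eq3_2} to obtain $(b_h * g_1)*g_2 = (\omega_1 * \pi_L(g_2))\cdot(\pi_L(g_2)^{-1} u_1 g_2)$. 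Here the associativity of the $A_L$-action together with $\pi_L(g_1)\pi_L(g_2) = \pi_L(g_1 g_2)$ gives $\omega_1 * \pi_L(g_2) = b_{h_0}*\pi_L(g_1 g_2)$, while $\pi_L(g_2)^{-1} u_1 g_2 = \pi_L(g_2)^{-1}\pi_L(g_1)^{-1} u g_1 g_2 = \pi_L(g_1 g_2)^{-1} u (g_1 g_2)$; comparison with the defining formula for $b_h * (g_1 g_2)$ finishes the argument.

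The manipulations are entirely formal, so there is no deep obstacle; the only thing requiring care is the bookkeeping that keeps the two coordinates apart — the free-group coordinate in $F_0$, governed by the honest $A_L$-action of Lemma \ref{Lem3_10}, and the coset coordinate in $\Ker(\pi_L)$, governed by the twisted factor $\pi_L(g)^{-1} u g$ — and checking that this factor satisfies the cocycle identity $\pi_L(g_2)^{-1}\bigl(\pi_L(g_1)^{-1} u g_1\bigr) g_2 = \pi_L(g_1 g_2)^{-1} u (g_1 g_2)$ that makes composition go through. Establishing Equation \ref{Eq3_2} in full generality at the outset is precisely what collapses both axioms to these short computations.
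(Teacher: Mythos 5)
Your proof is correct and takes essentially the same route as the paper's: the paper's entire argument is the composition computation on a basis element $b_h = b_{h_0}\cdot u$ via Equation \ref{Eq3_2}, resting on the same cocycle identity $\pi_L(g_2)^{-1}\bigl(\pi_L(g_1)^{-1}ug_1\bigr)g_2 = \pi_L(g_1g_2)^{-1}u\,g_1g_2$ and the $A_L$-action of Lemma \ref{Lem3_10}. Your extra verifications --- that $\cdot\, v$ extends to an injective homomorphism $F_0 \to F$, that Equation \ref{Eq3_2} holds for arbitrary $\omega \in F_0$, well-definedness of $*g$, and the identity axiom --- are details the paper leaves implicit rather than a different approach.
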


\begin{proof}
The lemma is essentially a consequence of the fact that the action of $A_L$ on $F_0$ is well-defined. 
Let $g_1,g_2\in A_1$ and let $b_h=b_{h_0} \cdot u\in B$, where $h_0 \in T_0$ and $u \in \Ker(\pi_L)$. 
Then, using Equation \ref{Eq3_2}, 
\begin{gather*}
(b_h*{g_1})*{g_2} = 
((b_{h_0}\cdot u)*{g_1})*{g_2} =
\big( (b_{h_0} * \pi_L(g_1)) \cdot (\pi_L(g_1)^{-1} u g_1) \big)*{g_2}\\ =
((b_{h_0}*{\pi_L(g_1)})*{\pi_L(g_2)}) \cdot (\pi_L(g_2)^{-1} (\pi_L(g_1)^{-1} u g_1 ) g_2)
\end{gather*}
\[
 =
(b_{h_0}*{\pi_L(g_1g_2)}) \cdot (\pi_L(g_1g_2)^{-1} u g_1g_2)= 
b_h*{g_1g_2}\,.
\proved
\]
\end{proof}

Recall the homomorphism $\varphi : F \to \Ker (\pi_1)$ that sends $b_h$ to $z^h$ for all $h \in T$.
We consider the semi-direct product $G = A_1 \ltimes F$ associated with the above action, and we turn to define an extension of $\varphi$ to $G$.

\begin{lem}\label{Lem3_14}
The map $G \to A$, $(g, \omega) \mapsto g\,\varphi(\omega)$, is a well-defined homomorphism.
\end{lem}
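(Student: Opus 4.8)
The plan is to reduce the statement to a single compatibility identity between $\varphi$ and the action, namely
\[
\varphi(\omega * g) = g^{-1}\,\varphi(\omega)\,g \qquad (\omega \in F,\ g \in A_1)\,,
\]
which I will call $(\star)$. Note that $\varphi$ needs no separate ``well-definedness'' check, since $F$ is free on $\{b_h \mid h \in T\}$ and so any prescription on the basis extends; likewise $g\,\varphi(\omega)$ is an honest element of $A$ because $g \in A_1 \le A$ and $\varphi(\omega) \in \Ker(\pi_1) \le A$. Granting $(\star)$, the homomorphism property is immediate: the product in $G = A_1 \ltimes F$ reads $(g_1,\omega_1)(g_2,\omega_2) = (g_1 g_2,\,(\omega_1 * g_2)\,\omega_2)$, and applying $\varphi$ together with $(\star)$ to the factor $\omega_1 * g_2$ turns $g_1 g_2\,\varphi\big((\omega_1*g_2)\omega_2\big)$ into $g_1\varphi(\omega_1)\,g_2\varphi(\omega_2)$, which is $\psi(g_1,\omega_1)\psi(g_2,\omega_2)$. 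So the whole content lies in $(\star)$.

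I would prove $(\star)$ in stages that mirror the construction of the action. Since $\varphi$ and, for each fixed $g$, the map $\omega \mapsto \omega * g$ are both homomorphisms of $F$, it suffices to check $(\star)$ on basis elements $\omega = b_h$, $h \in T$. Moreover, because $*$ is a right action (Lemma \ref{Lem3_13}) and conjugation $(\cdot)^g$ is multiplicative in $g$, the identity $\varphi(b_h * g) = (z^h)^g$ propagates along products $g = g_1 g_2$, and the case of $g^{-1}$ follows from that of $g$ by cancelling $*s$ with $*s^{-1}$. Hence it is enough to treat, for the $A_L$-part, a single vertex $g = s \in V(L)$, and then to deal with the $\Ker(\pi_L)$-direction separately.

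The generator case is a direct computation from Lemma \ref{Lem3_6} and the defining formula for $b_h * s$, the target being $\varphi(b_h) = z^h = h^{-1}zh$ and $(z^h)^s = z^{hs}$. When $s \in V(L_1)$ one has $k_s = 1$, hence $m_{z,s}=2$ and $sz = zs$; this commutation is exactly what forces the two possibilities $u(hs)=hs$ and $u(hs)=s^{-1}hs$ to give the same value $z^{hs}$ under $\varphi$. The one delicate case is $s = x \in V(L\setminus L_1)$ with $h = x^{k_x-1}$, where $b_h * x$ is the long word $b_{x^{k_x-1}}\cdots b_x\,b_1\,b_x^{-1}\cdots b_{x^{k_x-1}}^{-1}$; applying $\varphi$ produces $z^{x^{k_x-1}}\cdots z^x\,z\,(z^x)^{-1}\cdots (z^{x^{k_x-1}})^{-1}$, which is precisely the right-hand side of the second line of \eqref{Eq2_1} with $t=z$, $s=x$, $m_{x,z}=2k_x$, and hence equals $z^{x^{k_x}} = (z^h)^x$. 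This is the step where the whole design of the action was tailored to the even Artin relations, and I expect it to be the main obstacle.

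Finally I would promote $(\star)$ from $A_L$ to $A_1$. Writing $h = h_0 u$ with $h_0 \in T_0$ and $u \in \Ker(\pi_L)$, the defining formula \eqref{Eq3_2} gives $b_h * g = (b_{h_0} * \pi_L(g)) \cdot (\pi_L(g)^{-1} u g)$. Using the already established $A_L$-case for $b_{h_0}*\pi_L(g)$, together with the observation that the relabelling $\cdot v$ corresponds under $\varphi$ to conjugation by $v$ (indeed $\varphi(b_{kv}) = z^{kv} = (z^k)^v$), a short computation yields
\[
\varphi(b_h * g) = \big(z^{h_0\,\pi_L(g)}\big)^{\pi_L(g)^{-1}ug} = z^{h_0\,\pi_L(g)\,\pi_L(g)^{-1}ug} = z^{h_0 u g} = z^{hg} = (z^h)^g\,.
\]
Extending from basis elements to all of $F$ by multiplicativity of $\varphi$ and of $*g$ then establishes $(\star)$ in full, and the homomorphism property recorded in the first paragraph completes the proof.
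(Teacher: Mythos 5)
Your proposal is correct and takes essentially the same route as the paper: both reduce the lemma to the single identity $\varphi(\omega * g) = \varphi(\omega)^{g}$, verify it on basis elements $b_h$ against generators, split off the $\Ker(\pi_L)$-part using Equation \ref{Eq3_2} together with the fact that the relabelling $\cdot\, v$ corresponds under $\varphi$ to conjugation by $v$, and settle the same case analysis --- commutation $sz=zs$ when $s \in V(L_1)$, and Equation \ref{Eq2_1} for the long-word case $h_0 = x^{k_x-1}$ yielding $z^{x^{k_x}}$. The only difference is cosmetic sequencing (you establish the $A_L$-on-$F_0$ compatibility first and then promote to all of $A_1$ in one computation, whereas the paper performs the $h = h_0 u$ splitting inside the generator-by-generator check), so the two arguments are the same in substance.
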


\begin{proof}
We have to check that, for all $g_1$, $g_2\in A_1$ and all $\omega_1$, $\omega_2\in F$, we have
\begin{gather*}
\varphi (g_1,\omega_1) \varphi(g_2,\omega_2) = g_1g_2 \varphi(\omega_1)^{g_2} \varphi(\omega_2)\\
 = \varphi (g_1g_2,(\omega_1*{g_2})\omega_2) = g_1 g_2 \varphi( (\omega_1*{g_2}) \omega_2)\,.
\end{gather*}
Since the restriction of $\varphi$ to $F$ is a  group homomorphism, this is equivalent to show that $\varphi (\omega_1)^{g_2} = \varphi(\omega_1*{g_2})$.
It is enough to prove this for the group generators. 
So, we can assume that $\omega_1=b_h$ for some $h\in T$, and that $g_2=s$ for some vertex $s\in V(\Gamma)$, $s\neq z$. 
We have $\varphi(b_h)^s = s^{-1} z^h s = s^{-1} h^{-1} z h s$, and we need to check that this is equal to $\varphi(b_h*s)$. 
To see it we set $h=h_0 u$, with $h_0 \in T_0$ and $u \in \Ker(\pi_L)$, so that $b_h = b_{h_0} \cdot u$.

Observe first that, if $s\not\in V(L)$, then $\pi_L(s)=1$, thus $b_h*s=b_{h_0} \cdot (us) =b_{h_0 u s} = b_{hs}$, and therefore 
\[
\varphi(b_h*s)=\varphi (b_{hs})= z^{hs} = \varphi (b_h)^s\,.
\]

So, from now on, we will assume that $s\in V(L)$. 
Then $\pi_L(s)=s$,  thus $b_h^s = (b_{h_0}*s) \cdot (s^{-1}us)$, and therefore
\[
\varphi(b_h*s) = \varphi(b_{h_0}*s)^{s^{-1}us} = s^{-1} \varphi(b_{h_0}*s)^{s^{-1}u}s\,.
\]
So, we only have to prove that $\varphi(b_{h_0}*s) =z^{h_0s}$.
We  distinguish three different cases.
If $h_0 s \in T_0$, then $b_{h_0}*s = b_{h_0s}$, thus
\[
\varphi (b_{h_0}*s) = \varphi (b_{h_0 s})=z^{h_0 s}\,.
\]
If $h_0s \not \in T_0$, $s \in V(L_1)$ and $s\in S_i$ for every $x_i\in\supp(h_0)$, then $b_{h_0}*s = b_{s^{-1} h_0 s}$, thus
\[ 
\varphi(b_{h_0}*s) = \varphi( b_{s^{-1} h_0 s})= z^{s^{-1} h_0 s}=z^{h_0 s}\,.
\]
Finally, if $s=x\in V(L\setminus L_1)$ and $h_0=x^{k_x-1}$, then 
\[
b_{h_0}*x = b_{x^{k_x-1}} \cdots b_{x}\,b_1\,b_x^{-1} \cdots b_{x^{k_x-1}}^{-1}\,,
\]
thus  
\[ 
\varphi (b_{h_0}*x) = 
\varphi(b_{x^{k_x-1}} \cdots b_{x}\, b_1\, b_x^{-1} \cdots b_{x^{k_x-1}}^{-1})
\] \[
= z^{x^{k_x-1}} \cdots z^x\, z\, (z^x)^{-1} \cdots (z^{x^{k_x-1}})^{-1} = 
z^{x^{k_x}} = z^{h_0x}\,.
\proved
\]
\end{proof}

Now, we want to define the inverse map of $\varphi$. 
We do it by giving the images of the Artin generators of $A$, that is, the vertices of $\Gamma$.

\begin{lem}\label{Lem3_15}
There is a well-defined homomorphism $\psi : A \to G$ that sends $s$ to $s$ for all $s \in V(\Gamma) \setminus \{z\}$, and sends $z$ to $b_1$.
\end{lem}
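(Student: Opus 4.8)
The plan is to verify that the assignment $\psi$ respects every defining relation of $A$, since $A$ is given by an explicit presentation. Because $G = A_1 \ltimes F$ and $A_1$ is a standard parabolic subgroup sitting inside $G$, the generators $s \in V(\Gamma) \setminus \{z\}$ are simply sent to themselves, so any Artin relation involving only vertices of $\Gamma_1$ is automatically satisfied in $G$. Thus the only relations that require checking are those involving $z$, namely the relations $\Pi(z,s:m_{z,s}) = \Pi(s,z:m_{z,s})$ for each $s \in V(\Gamma)$ with $m_{z,s} \neq \infty$, that is, for each $s \in V(L)$. Since $A$ is even, each such relation has the form $(zs)^{k_s} = (sz)^{k_s}$, which I will rewrite (as in Subsection \ref{SubSec2_3}) in the conjugation form involving the elements $z^{s^i}$.

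First I would translate the relation into the language of $F$. Under $\psi$, the element $z$ maps to $b_1 \in F$, and the conjugate $z^{s^i}$ should map to $\psi(s)^{-i} b_1 \psi(s)^i = b_1 * s^i$ (using that $b_1 \in F$ and $s \in A_1$ act on $F$ via the semi-direct product structure, so conjugation of $b_1$ by $s$ in $G$ realizes the action $*s$). The crucial computation is therefore to show that the images under $\psi$ of the two sides of $(zs)^{k_s} = (sz)^{k_s}$ agree, which by the identities \eqref{Eq2_1} reduces to verifying that $b_1 * s^{k_s}$ and $b_1 * s^{-1}$ admit the prescribed expressions in terms of $b_1, b_1*s, \dots, b_1 * s^{k_s - 1}$. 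Here I would split into the two cases for $s$: when $s = x \in V(L \setminus L_1)$ the relevant action was defined precisely so that $b_1 * x^i = b_{x^i}$ for $0 \le i \le k_x - 1$ and $b_{x^{k_x-1}} * x$ is the long word matching \eqref{Eq2_1}; when $s \in V(L_1)$ we have $k_s = 1$, so the relation is simply $zs = sz$, i.e. $b_1 * s = b_1$, which follows from $u(s) = s^{-1} \cdot 1 \cdot s = 1$ via Lemma \ref{Lem3_6}(1).

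\textbf{The main obstacle} will be the case $s = x \in V(L \setminus L_1)$ with $k_x > 1$: here I must check that iterating the generator action $*x$ on $b_1$ reproduces exactly the recursion encoded in \eqref{Eq2_1}, and in particular that $b_1 * x^{k_x}$ equals $(b_1 * x^{k_x-1})(b_1 * x^{k_x-2}) \cdots (b_1*x)\, b_1\, (b_1 * x)^{-1} \cdots$, so that applying $\varphi$ recovers $z^{x^{k_x}}$. This is the point where the carefully chosen formula for $b_{x^{k_x-1}} * x$ in the definition preceding Lemma \ref{Lem3_7} does its work, and the bookkeeping of the long conjugation word must be matched term-by-term against \eqref{Eq2_1}; essentially the same verification already appears inside the proof of Lemma \ref{Lem3_14} (the final displayed computation there shows $\varphi(b_{h_0}*x) = z^{x^{k_x}}$), so I expect to be able to leverage that calculation rather than redo it.

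Finally, once all relations are seen to hold in $G$, the universal property of the presentation of $A$ furnishes the desired homomorphism $\psi : A \to G$, completing the proof.
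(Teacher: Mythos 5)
Your proposal is correct and takes essentially the same approach as the paper: reduce to the relations involving $z$, rewrite $(zs)^{k_s}=(sz)^{k_s}$ in the conjugation form of Equation \ref{Eq2_1}, and check that the action makes both sides agree in $G$, the key case $s=x\in V(L\setminus L_1)$ being settled by the defining formula for $b_{x^{k_x-1}}*x$ together with $b_1*x^i=b_{x^i}$ for $i\le k_x-1$, and the case $s\in V(L_1)$ by $u(s)=1$. The only cosmetic difference is that you also propose to verify the $b_1*s^{-1}$ identity and to lean on the computation inside Lemma \ref{Lem3_14}, neither of which is needed, since the $z^{s^{k_s}}$ form of the relation alone encodes the Artin relation and the required identity in $G$ is immediate from the definition of the action.
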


\begin{proof}
We have to check that the Artin relations are preserved by $\psi$. 
Note that it suffices to check it for the Artin relations that involve $z$ and some $s \in V(\Gamma) \setminus \{ z\}$. 
If $s\not\in V(L)$, then there is nothing to check because, in that case, $s$ and $z$ are not linked in $\Gamma$, hence there is no relation between them. 
If $s\in V(L)$, then we can rewrite the Artin relation as
\[
z^{s^{k_s}} = z^{s^{k_s-1}} \cdots z^s\, z\, (z^s)^{-1} \cdots (z^{s^{k_s-1}})^{-1}\,.
\]
We include here the case $s \in V(L_1)$, where we have $k_s=1$ and the above formula is $z^s=z$.
Applying $\psi$ to the left hand side of this equation we get
\[
\psi(z^{s^{k_s}}) = s^{-k_s}\, b_1\, s^{k_s} = s^{-1}\, b_{s^{k_s-1}}\, s = b_{s^{k_s-1}} \cdots b_{s}\ b_1\, b_s^{-1} \cdots b_{s^{k_s-1}}^{-1}\,,
\]
which is exactly what we get applying $\psi$ to the right hand side.
\end{proof}

Lemma \ref{Lem3_14} and Lemma \ref{Lem3_15} show part of the following result.

\begin{prop}\label{Prop3_16}
The maps $\varphi:G\to A$ and $\psi:A\to G$ are well-defined group isomorphisms.
\end{prop}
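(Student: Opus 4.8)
The plan is to build on Lemma \ref{Lem3_14} and Lemma \ref{Lem3_15}, which already guarantee that $\varphi:G\to A$ and $\psi:A\to G$ are well-defined group homomorphisms. To finish, I would prove that they are mutually inverse, that is $\varphi\circ\psi=\Id_A$ and $\psi\circ\varphi=\Id_G$. As both composites are group homomorphisms, it suffices to verify each identity on a set of generators.

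The equality $\varphi\circ\psi=\Id_A$ is immediate on the vertices of $\Gamma$. For $s\in V(\Gamma)\setminus\{z\}$ one has $\psi(s)=s\in A_1\subseteq G$ and $\varphi(s)=s$; for $z$ one has $\psi(z)=b_1$ and $\varphi(b_1)=z^1=z$. Hence $\varphi\circ\psi$ fixes every generator of $A$.

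For $\psi\circ\varphi=\Id_G$, recall that $G=A_1\ltimes F$ is generated by $A_1$ together with the basis elements $b_h$, $h\in T$. On $A_1$ the composite is visibly the identity. The one substantial point is to check $\psi(\varphi(b_h))=b_h$ for every $h\in T$. Here $\varphi(b_h)=z^h=h^{-1}zh$, and since $h\in T\subseteq A_1$ is a word in the vertices distinct from $z$, applying $\psi$ gives $\psi(z^h)=h^{-1}b_1h$ inside $G$. Reading off the multiplication $(g_1,\omega_1)(g_2,\omega_2)=(g_1g_2,(\omega_1*g_2)\omega_2)$ from Lemma \ref{Lem3_14}, conjugation of $b_1\in F$ by $h\in A_1$ in the semi-direct product gives $h^{-1}b_1h=b_1*h$. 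Thus the whole proposition reduces to verifying the identity
\[
b_1*h=b_h\qquad(h\in T)\,.
\]

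The hard part is exactly this last identity, which expresses that the action ``behaves as one might expect''. I would prove it by writing $h=h_0u$ with $h_0\in T_0$ and $u\in\Ker(\pi_L)$, so that $\pi_L(h)=h_0$ and $\pi_L(h)^{-1}h=u$. Applying the formula (\ref{Eq3_2}) for the extended action to $b_1=b_1\cdot 1$ then yields $b_1*h=(b_1*h_0)\cdot u$. Since $h_0\in T_0\subseteq A_{L_1}T_0$ and $u(h_0)=h_0$, Lemma \ref{Lem3_12} gives $b_1*h_0=b_{h_0}$, whence $b_1*h=b_{h_0}\cdot u=b_{h_0u}=b_h$. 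Together with the two generator checks, this proves $\varphi\circ\psi=\Id_A$ and $\psi\circ\varphi=\Id_G$, so $\varphi$ and $\psi$ are inverse isomorphisms. The main obstacle is concentrated in the displayed identity $b_1*h=b_h$; everything else is a routine verification on generators.
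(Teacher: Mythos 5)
Your proof is correct and takes essentially the same route as the paper: both reduce the proposition to checking that the composites fix generators, with the crux being the identity $b_1 * h = b_h$ for $h \in T$, which the paper phrases via Lemma \ref{Lem3_12} as the statement that $b_1$ together with the Artin generators of $A_1$ generates $G$. If anything, your write-up is slightly more complete, since you make explicit the extension of Lemma \ref{Lem3_12} from $A_{L_1}T_0$ to all of $T$ using Equation \ref{Eq3_2} and the decomposition $h = h_0 u$, a step the paper leaves implicit.
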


\begin{proof}
We have already seen that both maps are group homomorphisms. 
We claim that they are inverses of each other. 
This will prove the result. 
Let $s \in V(\Gamma)$, $s \neq z$. 
We have $(\varphi \circ \psi)(s) = \varphi(s)=s$ and $(\psi \circ \varphi)(s) = \psi(s) =s$. 
Also $(\varphi \circ \psi)(z)=\varphi(b_1)=z$ and $(\psi \circ \varphi)(b_1) = \psi(z)=b_1$.
Moreover, Lemma \ref{Lem3_12} implies that $b_1$ and the Artin generators of $A_1$ generate the whole group $G$, so $\psi\circ \varphi$ is the identity of $G$.
Similarly, $\varphi \circ \psi$ is the identity of $A$.
\end{proof}

Now, we obtain immediately our main result.

\begin{thm}\label{Thm3_17}
Every even Artin group of FC type is poly-free.
\end{thm}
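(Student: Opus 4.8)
The plan is to argue by induction on the number of vertices of $\Gamma$. The base case is the empty graph, where $A_\Gamma$ is trivial and hence poly-free in a vacuous way (a tower of length $0$); alternatively one may start from the single-vertex graph, where $A_\Gamma \cong \Z$ is free. For the inductive step, I would invoke Proposition \ref{Prop3_2} to write $A_\Gamma = F \rtimes A_1$, where $F$ is free and $A_1$ is an even Artin group of FC type based on a proper subgraph $\Gamma_1$ of $\Gamma$. Since $\Gamma_1$ has strictly fewer vertices than $\Gamma$, the inductive hypothesis applies to $A_1$ and furnishes a tower
\[
1 = H_0 \unlhd H_1 \unlhd \cdots \unlhd H_N = A_1
\]
with each quotient $H_{i+1}/H_i$ free.

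The next step is to pull this tower back along the projection $p : A_\Gamma \to A_1$ associated with the splitting $A_\Gamma = F \rtimes A_1$, whose kernel is exactly $F$. Setting $G_i = p^{-1}(H_i)$ for $i \in \{0, \dots, N\}$, I obtain $G_0 = \Ker(p) = F$ and $G_N = A_\Gamma$. Standard group theory shows that $G_i \unlhd G_{i+1}$: for $g \in G_{i+1}$ and $x \in G_i$ one has $p(g x g^{-1}) = p(g)\, p(x)\, p(g)^{-1} \in H_i$ because $H_i \unlhd H_{i+1}$, whence $g x g^{-1} \in G_i$. Moreover the homomorphism $G_{i+1} \to H_{i+1}/H_i$, $g \mapsto p(g) H_i$, is surjective with kernel $G_i$, so $G_{i+1}/G_i \cong H_{i+1}/H_i$ is free. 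Prepending the trivial group then yields the tower
\[
1 \unlhd G_0 = F \unlhd G_1 \unlhd \cdots \unlhd G_N = A_\Gamma\,,
\]
whose bottom quotient $G_0/1 = F$ is free and whose remaining quotients $G_{i+1}/G_i$ are free by the above. Hence $A_\Gamma$ is poly-free, completing the induction.

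In essence, the statement records that the class of poly-free groups is closed under extensions with free kernel, applied to the short exact sequence $1 \to F \to A_\Gamma \to A_1 \to 1$ coming from Proposition \ref{Prop3_2} (itself a consequence of the isomorphism in Proposition \ref{Prop3_16}). I do not expect any genuine obstacle at this stage, since all the substantive work has already been carried out in establishing that semi-direct decomposition; the only points requiring minor care are the choice of base case and the routine verification that pulling back a subnormal series along $p$ preserves both normality and the isomorphism type of the successive quotients.
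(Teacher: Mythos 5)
Your proposal is correct and follows the paper's proof exactly: the paper also argues by induction on the number of vertices, invoking the decomposition $A_\Gamma = F \rtimes A_1$ from Proposition \ref{Prop3_2} (established via Proposition \ref{Prop3_16}) and concluding poly-freeness of $A_\Gamma$ from that of $A_1$. The only difference is that the paper leaves implicit the routine pullback of the tower along the projection, which you spell out correctly.
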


\begin{proof}
By Proposition \ref{Prop3_16}, $A_{\Gamma} \simeq G = F \rtimes A_{1}$. 
By induction we may assume that $A_{1}$ is poly-free, thus $A$ is also poly-free.
\end{proof}


\section{Residually finiteness}\label{Sec4}

In this section we will show that even Artin groups of FC type are residually finite. 
Recall that a group $G$ is said to be \emph{residually finite} if, for every $g\in G\setminus \{1\}$, there is a normal subgroup of finite index in $G$ not containing $g$. 
It is well-known that being residually finite is not closed under short exact sequences, in the sense that, if $N$ is a normal subgroup of $G$ and both $N$ and $G/N$ are residually finite, then one cannot deduce the same for $G$ itself. 
However, the situation changes if we work under some extra hypothesis.
For example, a direct product of residually finite groups is residually finite. 
This can be generalized to the following result of Boler--Evans \cite{BolEva1} that will be crucial in our argument.

\begin{thm}[Boler--Evans \cite{BolEva1}]\label{Thm4_1}
Let $G_1, G_2$ be residually finite groups, and let $L\leq G_1,G_2$ such that both split as $G_i=H_i\rtimes L$. 
Then the amalgamated free product $G=G_1\ast_{L} G_2$ is residually finite.
\end{thm}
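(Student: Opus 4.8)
The plan is to use the retraction hypothesis to present $G$ as a semidirect product with a free-product kernel, and then to separate elements by finite quotients built in an $L$-equivariant way. Let $r_i\colon G_i\to L$ be the retraction coming from $G_i=H_i\rtimes L$, so that $H_i=\Ker(r_i)$ and $r_i|_L=\Id$. Since $r_1$ and $r_2$ agree on $L$, the universal property of the amalgam gives a retraction $\rho\colon G\to L$ with $\rho|_{G_i}=r_i$; setting $N=\Ker(\rho)$ we get $G=N\rtimes L$. I would then identify $N$ by letting it act on the Bass--Serre tree $X$ of $G=G_1*_LG_2$. Since $l\mapsto\rho(g)\,l\,\rho(g)^{-1}$ is injective on $L$, the map $\rho$ is injective on every conjugate $gLg^{-1}$, so $N$ meets each edge stabiliser trivially. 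Moreover $N\cap G_i=H_i$, and from $G=NL$ with $L\le G_i$ one gets $NG_1=NG_2=NL=G$, whence $N$ has exactly one orbit of type-$1$ vertices, one of type-$2$ vertices, and one of edges. Thus $N\backslash X$ is a single edge, and Bass--Serre theory yields $N\cong H_1*H_2$, the conjugation action of $L$ preserving each factor $H_i$ by the very action defining $G_i$. In short, $G\cong(H_1*H_2)\rtimes L$.

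Now take $g\in G\setminus\{1\}$ and write $g=wl$ with $w\in N$ and $l\in L$. If $l\neq1$, then $\rho(g)=l\neq1$, and a finite quotient of the residually finite group $L$ precomposed with $\rho$ separates $g$. So assume $g=w\in N\setminus\{1\}$, with reduced form $w=a_1b_1a_2b_2\cdots$ alternating between $H_1\setminus\{1\}$ and $H_2\setminus\{1\}$. Using residual finiteness of each $G_i$, I would choose a finite-index subgroup $M_i\trianglelefteq G_i$ with $M_i\le H_i$ missing every syllable of $w$ that lies in $H_i$ (take $M_i=H_i\cap\Ker(f_i)$ for an appropriate finite quotient $f_i$ of $G_i$, intersecting finitely many such). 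Each $M_i$, being normal in $G_i$, is $L$-invariant, so the $\bar H_i:=H_i/M_i$ are finite $L$-groups and the induced $L$-equivariant map $N\to\bar N:=\bar H_1*\bar H_2$ sends $w$ to a reduced, hence nontrivial, word $\bar w$.

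It remains to separate $\bar w$ inside $\bar N\rtimes L$ by a finite quotient. As $\bar N=\bar H_1*\bar H_2$ is a free product of finite groups, $R:=\Ker(\bar N\to\bar H_1\times\bar H_2)$ is, by the Kurosh subgroup theorem, a finitely generated free group of finite index in $\bar N$; since $L$ preserves each factor, it preserves $R$. Because $R$ is finitely generated, the intersection $K$ of all its subgroups of index at most $d$ has finite index and is characteristic in $R$, hence $L$-invariant and normal in $\bar N$. Choosing $d$ so that some index-$\le d$ subgroup of the residually finite free group $R$ avoids $\bar w$ forces $\bar w\notin K$. Then $K\trianglelefteq\bar N\rtimes L$, and $(\bar N\rtimes L)/K=(\bar N/K)\rtimes L$ contains $L$ with finite index; a group that is virtually residually finite is residually finite, so a last finite quotient separates the image of $\bar w$. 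Composing all these maps separates $g$, proving that $G$ is residually finite.

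The heart of the matter is the $L$-equivariance: a naive separation of $\bar w$ in the free product $\bar N$ need not descend to the semidirect product $G$. This is exactly what forces the passage to the finitely generated free, $L$-invariant subgroup $R$ and then to a \emph{characteristic} finite-index subgroup $K$, which is $L$-invariant automatically. What makes this route available is the reduction to the clean form $(H_1*H_2)\rtimes L$; the most delicate part there is the orbit-and-stabiliser bookkeeping on the Bass--Serre tree showing that the $N$-edge-stabilisers are trivial and that the quotient graph is a single edge.
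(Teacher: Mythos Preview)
The paper does not prove Theorem~\ref{Thm4_1}; it is quoted from Boler--Evans and used as a black box in the proof of Theorem~\ref{Thm4_2}. So there is no proof in the paper to compare your argument against.

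That said, your argument is correct and is a clean modern account of the result. Two small expository points. First, when you write ``Choosing $d$ so that some index-$\le d$ subgroup of the residually finite free group $R$ avoids $\bar w$'', this presupposes $\bar w\in R$; if $\bar w\notin R$ then already $\bar w\notin K$ for any $d$, since $K\le R$, and you should say so. Second, the identification $N\cong H_1*H_2$ as an $L$-group (not merely as an abstract group) is what makes the whole scheme work, and it holds because $L$ fixes the base edge of the Bass--Serre tree and hence normalises the two base-vertex stabilisers $H_i=N\cap G_i$; you assert this, but it is the point that deserves the most emphasis. The remaining steps---the $L$-equivariant passage to finite quotients $\bar H_i$ via normal subgroups $M_i\trianglelefteq G_i$, the Kurosh argument that $R=\Ker(\bar N\to\bar H_1\times\bar H_2)$ is finitely generated free, the use of a characteristic finite-index subgroup of $R$ to secure $L$-invariance, and the final appeal to ``virtually residually finite implies residually finite''---are all sound.
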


Using this we get the following.

\begin{thm}\label{Thm4_2}
Every even Artin group of FC type is residually finite.
\end{thm}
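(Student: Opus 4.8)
The plan is to prove Theorem \ref{Thm4_2} by induction on the number of vertices of $\Gamma$, combining the structural splitting already established in Section \ref{Sec2} with the Boler--Evans criterion (Theorem \ref{Thm4_1}). The base case, when $\Gamma$ has a single vertex, is trivial since $A_\Gamma \cong \Z$, which is residually finite. For the inductive step, I would like to exhibit $A_\Gamma$ as an amalgamated free product of two groups that are residually finite by induction and that split as semi-direct products over a common subgroup, so that Theorem \ref{Thm4_1} applies directly.

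First I would select a vertex $z$ of $\Gamma$ and use the retraction structure provided by the evenness of $A_\Gamma$. Recall from Section \ref{Sec2} that the inclusion $A_1 \hookrightarrow A_\Gamma$ admits the retraction $\pi_1 : A_\Gamma \to A_1$ sending $z \mapsto 1$, where $A_1$ is the even Artin group of FC type on the proper subgraph $\Gamma_1 = \Gamma_{S \setminus \{z\}}$. The natural amalgam decomposition comes from viewing $A_\Gamma$ as generated by $A_1$ together with the vertex $z$: writing $L = \lk(z,\Gamma)$, the group $A_\Gamma$ decomposes as the amalgamated product $A_\Gamma = A_1 \ast_{A_L} (A_L \times \langle z \rangle)$, where $A_L \times \langle z \rangle = A_{\Gamma_{V(L) \cup \{z\}}}$ is the standard parabolic subgroup on the star of $z$ (here the evenness and FC hypothesis ensure that $z$ commutes appropriately only with the vertices of $L$, but in general the second factor is the parabolic $A_{\st(z,\Gamma)}$). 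Both factors are even Artin groups of FC type on proper subgraphs of $\Gamma$ (each omits at least one vertex), hence residually finite by the inductive hypothesis.

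To invoke Theorem \ref{Thm4_1} I must verify that both factors split as semi-direct products over the common subgroup $L' := A_L$. For the first factor this is exactly the splitting $A_1 = \Ker(\pi_L) \rtimes A_L$ recorded in Section \ref{Sec2}, using the retraction $\pi_L : A_1 \to A_L$. For the second factor $A_{\st(z,\Gamma)}$, the retraction onto $A_L$ (killing $z$) likewise furnishes a splitting $A_{\st(z,\Gamma)} = H \rtimes A_L$, where $H$ is the kernel of the projection sending $z \mapsto 1$ and fixing $V(L)$. Once both splittings over the common $L = A_L$ are in hand, Theorem \ref{Thm4_1} yields that the amalgam is residually finite, completing the induction.

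The main obstacle I anticipate is twofold and concerns the amalgam decomposition itself. First, I must justify rigorously that $A_\Gamma$ is genuinely the amalgamated product $A_{\Gamma_1} \ast_{A_L} A_{\st(z,\Gamma)}$ over the parabolic $A_L$; this is a statement about standard parabolic subgroups of Artin groups, which follows from van der Lek's theorem (cited in Section \ref{Sec2}) that standard parabolic subgroups are themselves Artin groups, together with a presentation check that the only relations involving $z$ are with vertices of $L$, so that adjoining $z$ to $A_1$ over $A_L$ produces no new identifications. Second, I must confirm that the subgroup $A_L$ really does sit inside both factors as the common base of compatible semi-direct product decompositions, which again rests on the evenness-driven retractions; the delicate point is matching the $A_L$-actions on the two factors so that the hypotheses of Theorem \ref{Thm4_1}---namely that each $G_i$ splits as $H_i \rtimes L$ with the \emph{same} $L$---are literally satisfied. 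Both of these are structural rather than computational, so I expect the verification to be clean, but they are where the real content of the argument lies.
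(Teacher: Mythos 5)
There is a genuine gap, and it is located exactly where your base case meets your inductive step. Your star--link amalgam $A_\Gamma = A_{\Gamma_1} \ast_{A_L} A_{\mathrm{st}(z,\Gamma)}$ only makes progress when the star of $z$ is a \emph{proper} subgraph, i.e.\ when some vertex of $\Gamma$ is not adjacent to $z$. If $\Gamma$ is complete, then every choice of $z$ gives $\mathrm{st}(z,\Gamma) = \Gamma$, the second factor is all of $A_\Gamma$, and the decomposition is degenerate, so the induction never terminates on complete graphs. But complete graphs are precisely the even Artin groups of spherical type --- already the two-generator case $\langle s,t \mid (st)^k = (ts)^k \rangle$ with $k \ge 2$ --- and these are not covered by your one-vertex base case $A_\Gamma \cong \mathbb{Z}$. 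Nor do they yield to Theorem \ref{Thm4_1} in any obvious way: for the dihedral-type group one is led to amalgams such as $\langle st \rangle \ast_{\langle (st)^k \rangle} (\langle (st)^k\rangle \times \langle s \rangle)$, and $\mathbb{Z}$ does not split as a semi-direct product over its subgroup $k\mathbb{Z}$, so the Boler--Evans hypotheses fail. The paper takes the complete case as its induction basis and disposes of it by a genuinely different input: spherical-type Artin groups are linear by Cohen--Wales \cite{CohWal1} and Digne \cite{Digne1}, hence residually finite. Your proof needs this (or some substitute for it) and currently lacks it.

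Apart from this, your inductive step is sound and runs parallel to the paper's. The paper chooses two vertices $s,t$ with $m_{s,t} = \infty$ (these exist exactly when $\Gamma$ is not complete, which is the dichotomy your write-up is missing) and writes $A = A_{S \setminus \{s\}} \ast_{A_{S \setminus \{s,t\}}} A_{S \setminus \{t\}}$, with both factors retracting onto the amalgamated parabolic because $A$ is even; your decomposition $A_{\Gamma_1} \ast_{A_L} A_{\mathrm{st}(z,\Gamma)}$, justified as you indicate by the presentation together with van der Lek's theorem, works just as well provided you add the hypothesis that $z$ is non-adjacent to at least one vertex. Your hedge about $A_L \times \langle z \rangle$ was indeed necessary: since $m_{z,s}$ may exceed $2$, the vertex $z$ need not commute with $A_L$, so the second factor must be the full parabolic on the star; the evenness retraction $z \mapsto 1$ still provides the splitting $A_{\mathrm{st}(z,\Gamma)} = \Ker(\pi) \rtimes A_L$ over the same subgroup $A_L$, so Theorem \ref{Thm4_1} applies once the complete case is in place as the base of the induction.
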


\begin{proof}
We argue by induction on the number of vertices of $\Gamma$.
Assume first that $\Gamma$ is complete. 
Then, by definition, $A_\Gamma$ is of spherical type, hence, by Cohen--Wales \cite{CohWal1} and Digne \cite{Digne1}, $A_{\Gamma}$ is linear, and therefore $A_{\Gamma}$ is residually finite.

Assume that $\Gamma$ is not complete.
Then we can choose two distinct vertices $s,t \in S=V(\Gamma)$ such that $m_{s,t} = \infty$.
Set $X = S \setminus \{ s \}$, $Y = S \setminus \{t\}$ and $Z = S \setminus \{s,t\}$.
From the presentation of $A$ follows that $A=A_X*_{A_Z} A_Y$.
Moreover, since $A$ is even, the inclusion map $A_Z \hookrightarrow A_X$ has a retraction $\pi_{X,Z}:A_X \to A_Z$ which sends $r$ to $r$ for all $r \in Z$ and sends $t$ to $1$, hence $A_X=\Ker(\pi_{X,Z}) \rtimes A_Z$.
Similarly, the inclusion map $A_Z \hookrightarrow A_Y$ has a retraction $\pi_{Y,Z}: A_Y \to A_Z$, hence $A_Y = \Ker (\pi_{Y,Z}) \rtimes A_Z$.
By the inductive hypothesis, $A_X$ and $A_Y$ are residually finite, hence, by Theorem \ref{Thm4_1}, $A$ is also residually finite. 
\end{proof}



\end{document}